\newtheorem{thm}{Theorem}[section]
\newtheorem{lemma}[thm]{Lemma}
\numberwithin{equation}{section}
\def\cH{\mathcal{H}}
\def\cK{\mathcal{K}}
\def\bR{\mathbb{R}}
\def\bP{\mathbb{P}}
\def\Tr{\mathrm{Tr}\,}
\def\bN{\mathbb{N}}
\def\bM{\mathbb{M}}
\def\diag{\mathrm{diag}}
\begin{document}
\baselineskip=15pt
\allowdisplaybreaks

\title{On matrix inequalities between the power means: counterexamples}

\author{Koenraad M.R.\ Audenaert$^{1,}$\footnote{E-mail: Koenraad.Audenaert@rhul.ac.uk}
\ and
Fumio Hiai$^{2,}$\footnote{E-mail: hiai.fumio@gmail.com}}

\date{}
\maketitle

\begin{center}
$^1$\,Department of Mathematics, Royal Holloway, University of London, \\
Egham TW20 0EX, United Kingdom
\end{center}

\begin{center}
$^2$\,Tohoku University (Emeritus), \\
Hakusan 3-8-16-303, Abiko 270-1154, Japan
\end{center}

\medskip
\begin{abstract}
\noindent
We prove that the known sufficient conditions on the real parameters $(p,q)$ for which
the matrix power mean inequality $((A^p+B^p)/2)^{1/p}\le((A^q+B^q)/2)^{1/q}$ holds for
every pair of matrices $A,B>0$ are indeed best possible. The proof proceeds by constructing
$2\times2$ counterexamples. The best possible conditions on $(p,q)$ for which
$\Phi(A^p)^{1/p}\le\Phi(A^q)^{1/q}$ holds for every unital positive linear map $\Phi$ and
$A>0$ are also clarified.

\bigskip\noindent
{\it 2010 Mathematics Subject Classification:}
Primary 15A45, 47A64

\medskip\noindent
{\it Key Words and Phrases:}
Matrix, operator, power mean, Jensen inequality, symmetric norm, joint convexity
\end{abstract}

\section{Introduction}

For each $n\in\bN$ we write $\bM_n$ for the $n\times n$ complex matrix algebra and $\bP_n$
for the set of positive definite matrices in $\bM_n$. For each non-zero real parameter $p$
and for every $A,B\in\bP_n$, the {\it $p$-power mean} of $A,B$ is
\begin{equation}\label{F-1.1}
\biggl({A^p+B^p\over2}\biggr)^{1/p},
\end{equation}
which is also defined for positive invertible operators on an arbitrary Hilbert space. In
particular, it is the arithmetic mean when $p=1$, and it is the harmonic mean when $p=-1$.
Moreover, when $p=0$, it is defined by continuity as
\begin{equation}\label{F-1.2}
\lim_{p\to0}\biggl({A^p+B^p\over2}\biggr)^{1/p}=\exp\biggl({\log A+\log B\over2}\biggr),
\end{equation}
which is the so-called {\it Log-Euclidean mean}, a kind of geometric mean but different
from that in the sense of operator means \cite{KA}. In fact, \eqref{F-1.1} is not an
operator mean except when $p=\pm1$.

In this paper we are concerned with conditions on $p$ and $q$ for the validity of the
matrix inequality between the power means
\begin{equation}\label{F-1.3}
\biggl({A^p+B^p\over2}\biggr)^{1/p}\le\biggl({A^q+B^q\over2}\biggr)^{1/q}.
\end{equation}
A more general result involving positive linear maps is known under suitable assumptions
on $p,q$ in \cite{FMPS,MPP,MP,MPS} (see Theorems \ref{T-2.1} and \ref{T-2.2} below).
Our interest here is showing that these sufficient conditions of $p,q$ are best
possible for \eqref{F-1.3} to hold. Although the result is naturally expected, no rigorous
proof is known to the best of our knowledge. This question for the best possible conditions
of $p,q$ showed up in some concavity/convexity problem of a certain matrix function in
\cite{Hi2}.

It is useful to write power means in terms of a positive linear map of block-diagonal
matrices. Defining a positive linear map $\Phi:\bM_{2n}\to\bM_n$ by
\begin{equation}\label{F-1.4}
\Phi\biggl(\begin{bmatrix}A&X\\Y&B\end{bmatrix}\biggr):={A+B\over2}
\end{equation}
for matrices in $\bM_{2n}$ partitioned in blocks in $\bM_n$, one can write for
$A,B\in\bP_n$
$$
\Phi\biggl(\begin{bmatrix}A&0\\0&B\end{bmatrix}^p\biggr)^{1/p}
=\biggl({A^p+B^p\over2}\biggr)^{1/p}.
$$
Therefore, it is also interesting to determine $p,q$ for which the inequality
\begin{equation}\label{F-1.5}
\Phi(A^p)^{1/p}\le\Phi(A^q)^{1/q}
\end{equation}
holds for every unital positive linear map $\Phi$. Most fundamental in such matrix/operator
inequalities are Choi's inequality \cite{Ch} (extending Davis \cite{Da}) and Hansen and
Pedersen's Jensen inequality \cite{HP}.

The paper is organized as follows. In Section 2 we state in more precise terms our problem
on the best possible $p,q$ for matrix inequalities \eqref{F-1.3} and \eqref{F-1.5}
together with the known affirmative results. A motivation coming from
\cite{Hi2} is also explained. Section 3 is the body of the proof of our main result by
constructing counterexamples to \eqref{F-1.3}, all of which are given by $2\times2$
matrices. Those are further reformulated to give counterexamples to
\eqref{F-1.5} for $\Phi:\bM_3\to\bM_2$.

\section{Result and motivation}

The main aim of this paper is to determine the range of real parameters $p,q$ for which
the matrix inequality between the power means in \eqref{F-1.3} holds. Before stating the
main result we first recall the affirmative result, which is known to hold in a more
general setting of \eqref{F-1.5}. Let $\cH$ and $\cK$ be general Hilbert spaces. Let
$B(\cH)$ be the algebra of all bounded linear operators on $\cH$ and $B(\cH)^{++}$ the set
of all positive invertible operators on $\cH$. Let $\Phi:B(\cH)\to B(\cK)$ be a positive
linear map that is unital, i.e., $\Phi(I_\cH)=I_\cK$, where $I_\cH$ denotes the identity
operator on $\cH$. Then the map
\begin{equation}\label{F-2.1}
A\in B(\cH)^{++}\longmapsto\Phi(A^p)^{1/p}\in B(\cK)^{++}
\end{equation}
can be defined for every $p\in\bR$ with $p\ne0$. Indeed, for every A$\in B(\cH)^{++}$ and
for every $p\ne0$, since $A^p\ge\delta I_\cH$ for some $\delta>0$,
$\Phi(A^p)\ge\delta I_\cK$ so that $\Phi(A^p)\in B(\cK)^{++}$. Moreover, the following
convergence in the operator norm is straightforward:
\begin{equation}\label{F-2.2}
\lim_{p\to0}\Phi(A^p)^{1/p}=\exp\Phi(\log A)
\end{equation}
for every $A\in B(\cH)^{++}$. Indeed,
\begin{align*}
{1\over p}\log\Phi(A^p)&={1\over p}\log\Phi(I_\cH+p\log A+o(p))
={1\over p}\log(I_\cK+p\Phi(\log A)+o(p)) \\
&=\Phi(\log A)+o(p),
\end{align*}
where $o(p)$ means that $o(p)/p\to0$ in the operator norm as $p\to0$. So we shall write
$\Phi(A^p)^{1/p}$ when $p=0$ to mean $\exp\Phi(\log A)$.

Under the above assumption, we state the following result which can be considered folklore.

\begin{thm}\label{T-2.1}
Let $p,q\in\bR$. The operator inequality
$$
\Phi(A^p)^{1/p}\le\Phi(A^q)^{1/q}
$$
holds for every $A\in B(\cH)^{++}$
if $(p,q)$ satisfies one of the following conditions:
\begin{equation}\label{F-2.3}
\begin{cases}
p=q, \\
1\le p<q, \\
p<q\le-1, \\
p\le-1,\ q\ge1, \\
1/2\le p<1\le q, \\
p\le-1<q\le-1/2.
\end{cases}
\end{equation}
\end{thm}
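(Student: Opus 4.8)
The plan is to reduce the six cases to three ``atomic'' inequalities together with a duality trick. First I would record the two classical ingredients. (a) The power function $t\mapsto t^s$ is operator monotone and operator concave on $(0,\infty)$ for $s\in[0,1]$, and operator convex on $(0,\infty)$ for $s\in[-1,0]\cup[1,2]$. (b) The Davis--Choi--Jensen inequality \cite{Da,Ch,HP}: for a unital positive linear map $\Phi$ and an operator convex function $f$ one has $\Phi(f(A))\ge f(\Phi(A))$, with the inequality reversed when $f$ is operator concave; the special case $f(t)=t^{-1}$ is Choi's inequality $\Phi(A)^{-1}\le\Phi(A^{-1})$.

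Next I would prove a duality lemma: since $X\mapsto X^{-1}$ reverses the order on positive invertible operators and $A\mapsto A^{-1}$ is a bijection of $B(\cH)^{++}$, the inequality $\Phi(A^p)^{1/p}\le\Phi(A^q)^{1/q}$ holds for all $A\in B(\cH)^{++}$ if and only if $\Phi(C^{-q})^{1/(-q)}\le\Phi(C^{-p})^{1/(-p)}$ holds for all $C\in B(\cH)^{++}$; that is, validity for $(p,q)$ is equivalent to validity for $(-q,-p)$. This matches the region $1\le p<q$ with $p<q\le-1$, and the region $1/2\le p<1\le q$ with $p\le-1<q\le-1/2$, while $p=q$ and $\{p\le-1,\ q\ge1\}$ are self-dual. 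Hence it suffices to treat $p=q$ (trivial), $1\le p<q$, $1/2\le p<1\le q$, and $\{p\le-1,\ q\ge1\}$.

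For $1\le p<q$, set $B=A^q$, so $A^p=B^{p/q}$ with $p/q\in(0,1)$; operator concavity of $t^{p/q}$ and Davis--Choi--Jensen give $\Phi(A^p)=\Phi(B^{p/q})\le\Phi(B)^{p/q}=\Phi(A^q)^{p/q}$, and applying the operator monotone map $t\mapsto t^{1/p}$ (note $1/p\in(0,1]$) yields $\Phi(A^p)^{1/p}\le\Phi(A^q)^{1/q}$; in particular $p=1$ gives $\Phi(A)\le\Phi(A^q)^{1/q}$ for every $q\ge1$. For $1/2\le p<1$, set $B=A^p$, so $A=B^{1/p}$ with $1/p\in(1,2]$; operator convexity of $t^{1/p}$ and Davis--Choi--Jensen give $\Phi(A)=\Phi(B^{1/p})\ge\Phi(B)^{1/p}=\Phi(A^p)^{1/p}$, and chaining with the previous observation covers $1/2\le p<1\le q$ via $\Phi(A^p)^{1/p}\le\Phi(A)\le\Phi(A^q)^{1/q}$. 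Finally, for $p\le-1$ and $q\ge1$: Choi's inequality gives $\Phi(A^{-1})^{-1}\le\Phi(A)$, and combining the case $(p,-1)$ (which lies in the already-proved region $p<q\le-1$, or is an equality when $p=-1$), this central inequality, and the case $(1,q)$ produces $\Phi(A^p)^{1/p}\le\Phi(A^{-1})^{-1}\le\Phi(A)\le\Phi(A^q)^{1/q}$.

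I do not expect a serious obstacle: the work is done by operator concavity/convexity of power functions and the Davis--Choi--Jensen inequality, both classical. The only delicate points are bookkeeping --- checking that each exponent lands in the interval where the relevant power function is operator monotone, concave, or convex, and verifying that the correspondence $(p,q)\leftrightarrow(-q,-p)$ matches the six regions exactly as claimed. (This is also why the genuinely hard direction is the converse --- that these conditions are \emph{sharp} --- which the paper settles in Section 3 through explicit $2\times2$ counterexamples.)
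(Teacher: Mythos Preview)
Your proposal is correct and follows essentially the same route as the paper: both arguments rest on Choi's inequality (or more generally Davis--Choi--Jensen) applied to the operator concave/convex power functions, obtaining $\Phi(A^p)\le\Phi(A^q)^{p/q}$ for $1\le p<q$ and the chain $\Phi(A^p)^{1/p}\le\Phi(A)\le\Phi(A^q)^{1/q}$ for the mixed cases. The paper compresses the remaining cases into the phrase ``the proof is similar,'' which is precisely your explicit duality $(p,q)\leftrightarrow(-q,-p)$; and your intermediate step $\Phi(A^p)^{1/p}\le\Phi(A^{-1})^{-1}\le\Phi(A)$ for $p\le-1$ is exactly what unpacks the paper's terse assertion $\Phi(A^p)^{1/p}\le\Phi(A)$ in that range.
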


\begin{proof}
For the convenience of the reader we give a concise proof using Choi's inequality
\cite[Theorem 2.1]{Ch}. When $1\le p<q$, we have $\Phi(A^p)\le\Phi(A^q)^{p/q}$ so that
$\Phi(A^p)^{1/p}\le\Psi(A^q)^{1/q}$. When $p\le-1$ and $q\ge1$, or when $1/2\le p<1\le q$,
we have $\Phi(A^p)^{1/p}\le\Phi(A)\le\Phi(A^q)^{1/q}$. The proof is similar for the
remaining cases.
\end{proof}

Next, let $\cH_1$ and $\cH_2$ be Hilbert spaces and $\Phi_i:B(\cH_i)\to B(\cK)$ be positive
linear maps, $i=1,2$, such that $\Phi_1(I_{\cH_1})+\Phi_2(I_{\cH_2})=I_\cK$. Define a
unital positive linear map $\Phi:B(\cH_1\oplus\cH_2)\to B(\cK)$ by
$$
\Phi\biggl(\begin{bmatrix}A&X\\Y&B\end{bmatrix}\biggr)
:=\Phi_1(A)+\Phi_2(B)
$$
for $A\in B(\cH_1)$ and $B\in B(\cH_2)$. For this $\Phi$, restricting map \eqref{F-2.1} to
$A\oplus B$ defines
$$
(A,B)\in B(\cH_1)^{++}\times B(\cH_2)^{++}\longmapsto
(\Phi_1(A^p)+\Phi_2(B^p))^{1/p}\in B(\cK)^{++}.
$$
When $p=0$, this means $\exp(\Phi_1(\log A)+\Phi_2(\log B))$ by \eqref{F-2.2}. Therefore,
the next result is a special case of Theorem \ref{T-2.1}, which was shown in
\cite{MPP,MP,MPS} (see also \cite[Chapter 4]{FMPS}). In fact, results in more general forms
were given there.

\begin{thm}\label{T-2.2}
Let $\Phi_i$, $i=1,2$, be as above. Then the operator inequality
$$
(\Phi_1(A^p)+\Phi_2(B^p))^{1/p}\le(\Phi_1(A^q)+\Phi_2(B^q))^{1/q}
$$
holds for every $A\in B(\cH_1)^{++}$ and $B\in B(\cH_2)^{++}$ if $(p,q)$ satisfies one of
the conditions in \eqref{F-2.3}.
\end{thm}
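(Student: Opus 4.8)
The plan is to read Theorem~\ref{T-2.2} as the concrete two-variable instance of Theorem~\ref{T-2.1}, exactly as the paragraph preceding the statement suggests. First I would set $\cH:=\cH_1\oplus\cH_2$ and use the unital positive linear map $\Phi:B(\cH)\to B(\cK)$ already introduced just above the statement, namely
$$
\Phi\biggl(\begin{bmatrix}A&X\\Y&B\end{bmatrix}\biggr):=\Phi_1(A)+\Phi_2(B);
$$
it is clearly linear, it is positive because the diagonal blocks $A\in B(\cH_1)$, $B\in B(\cH_2)$ of a positive block operator are themselves positive and $\Phi_1,\Phi_2$ are positive, and it is unital precisely because $\Phi_1(I_{\cH_1})+\Phi_2(I_{\cH_2})=I_\cK$ by assumption. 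Hence Theorem~\ref{T-2.1} applies to $\Phi$.

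Next I would restrict $\Phi$ to block-diagonal arguments. For $A\in B(\cH_1)^{++}$ and $B\in B(\cH_2)^{++}$ one has $A\oplus B\in B(\cH)^{++}$ (indeed $A\oplus B\ge\delta I_\cH$ for a suitable $\delta>0$), and since the functional calculus of a block-diagonal operator acts blockwise, $(A\oplus B)^r=A^r\oplus B^r$ for every $r\in\bR$, so that $\Phi\bigl((A\oplus B)^r\bigr)=\Phi_1(A^r)+\Phi_2(B^r)$. Applying the conclusion of Theorem~\ref{T-2.1} to $\Phi$ and the positive invertible operator $A\oplus B$, which is valid whenever $(p,q)$ lies in the region \eqref{F-2.3}, then gives
\begin{align*}
\bigl(\Phi_1(A^p)+\Phi_2(B^p)\bigr)^{1/p}
&=\Phi\bigl((A\oplus B)^p\bigr)^{1/p}
\le\Phi\bigl((A\oplus B)^q\bigr)^{1/q} \\
&=\bigl(\Phi_1(A^q)+\Phi_2(B^q)\bigr)^{1/q},
\end{align*}
which is the assertion. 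The boundary case $p=0$ (or $q=0$) goes the same way: there $\Phi\bigl((A\oplus B)^p\bigr)^{1/p}$ stands for $\exp\Phi(\log(A\oplus B))$, and $\log(A\oplus B)=(\log A)\oplus(\log B)$ together with \eqref{F-2.2} turns this into $\exp\bigl(\Phi_1(\log A)+\Phi_2(\log B)\bigr)$, which is exactly the convention fixed before the statement.

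I do not expect a genuine obstacle here: all of the analytic content already sits in Theorem~\ref{T-2.1}, and Theorem~\ref{T-2.2} is merely its specialization to the block-diagonal map together with the blockwise behaviour of the functional calculus, so the proof amounts to little more than a change of notation. The real work of the paper runs in the opposite direction---showing that \eqref{F-1.3} fails once $(p,q)$ leaves the region \eqref{F-2.3}---and that is what the $2\times2$ counterexamples of Section~3 are designed to do.
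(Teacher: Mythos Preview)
Your proposal is correct and follows exactly the approach the paper takes: the paragraph immediately preceding Theorem~\ref{T-2.2} already explains that the result is the specialization of Theorem~\ref{T-2.1} to the block-diagonal map $\Phi$, and the paper gives no separate proof beyond that remark. Your write-up simply spells out the details (positivity and unitality of $\Phi$, blockwise functional calculus, the $p=0$ convention) that the paper leaves implicit.
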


Obviously, when $\Phi_1(X)=\Phi_2(X)=(1/2)X$ for $X\in B(\cH)$, the expressions in
\eqref{F-2.1} and \eqref{F-2.2} reduce to the power mean in \eqref{F-1.1} and the
Log-Euclidean mean in \eqref{F-1.2}, respectively. Hence, the above theorem says that,
in particular, the matrix inequality between the power means in \eqref{F-1.3} holds if
$(p,q)$ satisfies one of \eqref{F-2.3}. It is natural to expect that the converse is also
true, that is, \eqref{F-2.3} is the optimal range of $(p,q)$ for which \eqref{F-1.3} holds
true. For this converse direction, it seems that no rigorous proof is known so far.
Now, the following is our main result, which completely settles the converse direction.

\begin{thm}\label{T-2.3}
Let $p,q\in\bR$, and assume that matrix inequality \eqref{F-1.3} holds for every
$A,B\in\bP_2$. Then $(p,q)$ satisfies one of the conditions in \eqref{F-2.3}.
\end{thm}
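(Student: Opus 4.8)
The plan is to prove the contrapositive: assuming $(p,q)$ violates every condition in \eqref{F-2.3}, I will construct $A,B\in\bP_2$ for which \eqref{F-1.3} fails. First note that each alternative in \eqref{F-2.3} forces $p\le q$ and that the equality case $p=q$ is itself among them, so a bad $(p,q)$ has either $p>q$ or $p<q$. If $p>q$, take $A=\diag(1,2)$ and $B=I_2$: both sides of \eqref{F-1.3} are then diagonal and their $(2,2)$-entries are the scalar power means $M_p(1,2)$ and $M_q(1,2)$, so \eqref{F-1.3} reduces to the false scalar inequality $M_p(1,2)\le M_q(1,2)$. Hence from now on I may assume $p<q$ and $(p,q)\notin\eqref{F-2.3}$.

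Next I would cut down the work using symmetry. Replacing $A,B$ by $A^{-1},B^{-1}$ in \eqref{F-1.3} and inverting both sides shows that \eqref{F-1.3} holds for all $A,B\in\bP_2$ at exponents $(p,q)$ if and only if it holds at $(-q,-p)$ (the swap $A\leftrightarrow B$ is also available but fixes $(p,q)$). A routine check shows that, for $p<q$, the complement of \eqref{F-2.3} is the union of the three regions
\[
\text{(I)}\ \,-1<p<q<1,\qquad\text{(II)}\ \,-1<p<\tfrac12,\ q\ge1,\qquad\text{(III)}\ \,p\le-1,\ -\tfrac12<q<1,
\]
where (III) is the image of (II) under $(p,q)\mapsto(-q,-p)$ and (I) is invariant (that reflection interchanging the parts $0\le p<q<1$ and $-1<p<q\le0$ of (I) and fixing $-1<p<0<q<1$). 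So it suffices to produce counterexamples for $(p,q)$ in (I) and (II). I would also record, from Theorem \ref{T-2.2}, that $r\mapsto\bigl((A^r+B^r)/2\bigr)^{1/r}$ is Loewner-nondecreasing on $[1,\infty)$ and on $(-\infty,-1]$; in region (II) this lets a counterexample at $(p,q)$ be carried to $(p,q')$ for every $q'\in[1,q]$, so there one needs only treat large $q$ (equivalently, an asymptotic statement as $q\to\infty$).

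For the constructions I would stay with $2\times2$ matrices throughout and test \eqref{F-1.3} through the equivalence, for Hermitian $2\times2$ matrices, of $M\ge0$ with $\Tr M\ge0$ and $\det M\ge0$, applied to $M:=\mathrm{RHS}-\mathrm{LHS}$. When $p\ne0$, the substitution $C:=A^p$, $D:=B^p$ rewrites \eqref{F-1.3} as
\[
\Bigl({C+D\over2}\Bigr)^{1/p}\le\Bigl({C^{q/p}+D^{q/p}\over2}\Bigr)^{1/q}\qquad(C,D\in\bP_2),
\]
which exposes the root $t\mapsto t^{1/p}$, not operator monotone when $|p|<1$, as the mechanism behind every failure (this is exactly the operator property used in the proofs of Theorems \ref{T-2.1}--\ref{T-2.2}). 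The counterexamples are then explicit one-parameter families: for instance $C$ diagonal and $D$ a fixed rotation of a diagonal matrix with a \emph{different} eigenvalue ratio, with one parameter driven to a boundary value (an eigenvalue to $0$ or to $\infty$). Expanding $\det(\mathrm{RHS}-\mathrm{LHS})$ in that parameter should produce a negative leading coefficient precisely on the bad side of the thresholds $p=\tfrac12$ and $p=-1$ (region (II)) and near $q=p$, $q=1$, $p=-1$ (region (I), which I would split into $0<p<q<1$ and $-1<p<0<q<1$, the range $-1<p<q<0$ then following by symmetry). The slices $p=0$ and $q=0$ occurring inside (I)--(II) are handled by the same scheme applied to the log-Euclidean expression $\exp\bigl((\log A+\log B)/2\bigr)$, or by continuity in $(p,q)$.

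The step I expect to be the main obstacle is precisely this last one: choosing a small number of parametrized $2\times2$ families whose determinant obstruction changes sign across the \emph{entire} boundary of \eqref{F-2.3}, not just at isolated exponents, and carrying out the asymptotic analysis near that boundary, where \eqref{F-1.3} fails only at higher order in the family parameter, so that the sign of a leading Taylor coefficient has to be pinned down exactly --- especially when one matrix degenerates to a rank-one projection, at which point $A^p$, $A^q$ and their $1/p$- and $1/q$-th roots must be expanded with care. A secondary burden is the bookkeeping needed to patch the regional constructions together and to verify the degenerate slices $p=0$, $q=0$, $q=\pm1$. Once the $2\times2$ counterexamples are in hand, they can be reformulated as counterexamples to \eqref{F-1.5} for unital positive linear maps $\Phi\colon\bM_3\to\bM_2$, as announced in the introduction.
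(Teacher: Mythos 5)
Your reductions are sound and match the paper's: the necessity of $p\le q$ via scalar (diagonal) examples, the symmetry $(p,q)\mapsto(-q,-p)$ obtained by inverting both sides, the resulting decomposition of the bad region into the pieces you call (I)--(III) with (III) the reflection of (II), the use of the determinant criterion for $2\times2$ Hermitian matrices, and the choice of test family ($A$ diagonal, $B$ a rotated diagonal matrix, expanded near a degenerate configuration) are all exactly the skeleton of Section~3 of the paper. Your extra observation that monotonicity on $[1,\infty)$ from Theorem~\ref{T-2.2} lets you push a counterexample from large $q$ down to all $q'\in[1,q]$ is correct, though the paper does not need it.

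However, there is a genuine gap, and you have named it yourself: everything after ``Expanding $\det(\mathrm{RHS}-\mathrm{LHS})$ \ldots should produce a negative leading coefficient'' is asserted, not proved, and it is precisely this computation that constitutes the proof. The paper's content here is Lemmas~\ref{L-3.1}--\ref{L-3.3}: for $A=\diag(1,x)$ and $B_\theta=U_\theta\,\diag(1,y)\,U_\theta^*$ one must compute the second-order Taylor expansion of $((A^r+B_\theta^r)/2)^{1/r}$ in $\theta$ via the Daleckii--Krein formula, extract the explicit coefficients $\alpha_r^{(1,1)},\alpha_r^{(1,2)}$, assemble the $\theta^2$-coefficient of the determinant, and then --- after the specialization $y=x^2$ --- carry out a second asymptotic analysis as $x\searrow0$, separately for $0<p<1/2$ and for $-1<p<0$, to pin down that the leading term is strictly negative; the case $p=0$ requires an independent expansion of the log-Euclidean mean (Lemma~\ref{L-3.2}), and the case $0<p<q<1$ requires the rank-one degenerate family $A=\diag(2,0)$, $B_\theta$ a projection, where the Fr\'echet--Taylor formula is unavailable and a direct eigendecomposition must be used (Lemma~\ref{L-3.3}), yielding the clean obstruction $-\bigl(\tfrac{1}{2^p+1}-\tfrac{1}{2^q+1}\bigr)^2<0$. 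None of these sign determinations follows from the general shape of your families --- indeed the $\theta^2$-coefficient is a difference of two terms of competing signs, and which one dominates depends delicately on the position of $(p,q)$ relative to the boundary of \eqref{F-2.3} --- so without these computations the claim that a counterexample exists in each region is unsupported. Your proposal is the correct strategy, but it stops exactly where the proof begins.
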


To prove the theorem, we need to provide counterexamples to \eqref{F-1.3} for any $(p,q)$
outside the range given in \eqref{F-2.3}, which will be done in the next section. It turns
out that all counterexamples are $2\times 2$ matrices. Restricted to the case $q=1$,
the theorem says the well-known fact \cite[Proposition 3.1]{HT} that the function
$t^p$ on $(0,\infty)$ is $2$-convex if and only if either $1\le p\le2$ or $-1\le p\le0$,
so $2$-convexity implies operator convexity in this case.

Theorem \ref{T-2.3}, with Theorem \ref{T-2.1}, shows that when $\Phi:\bM_4\to\bM_2$ is
\eqref{F-1.4} for $n=2$, matrix inequality \eqref{F-1.5} holds for every $A\in\bP_4$ if
and only if $(p,q)$ satisfies one of \eqref{F-2.3}. However, we can reformulate
counterexamples in Theorem \ref{T-2.3} to obtain the following better result. The proof
will be given in the last of the next section.

\begin{thm}\label{T-2.4}
Let $p,q\in\bR$, and assume that matrix inequality \eqref{F-1.5} holds for every unital
completely positive linear map $\Phi:\bM_3\to\bM_2$ and every $A,B\in\bP_3$. Then $(p,q)$
satisfies one of the conditions in \eqref{F-2.3}.
\end{thm}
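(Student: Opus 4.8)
The plan is to reduce Theorem~\ref{T-2.4} to the $2\times2$ counterexamples already produced for Theorem~\ref{T-2.3} by a dimension-counting trick on the map $\Phi$ in \eqref{F-1.4}. Recall that for $n=2$ the block map $\Phi\colon\bM_4\to\bM_2$ sending $\begin{bmatrix}A&X\\Y&B\end{bmatrix}\mapsto(A+B)/2$ already witnesses the failure of \eqref{F-1.5} whenever $(p,q)$ lies outside \eqref{F-2.3}, by feeding it block-diagonal arguments $A\oplus B$ with $A,B\in\bP_2$ chosen as in Theorem~\ref{T-2.3}. The point of Theorem~\ref{T-2.4} is to shave the input dimension from $4$ to $3$. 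So the first step is to examine the specific $2\times2$ counterexamples constructed in Section~3: since $t^p$ on $(0,\infty)$ fails to be $2$-convex exactly outside the stated range, one expects the extremal pairs $(A,B)$ to be essentially one-parameter perturbations of a single matrix, and in particular one should be able to arrange $A$ and $B$ so that the pair is unitarily equivalent to something supported, jointly, on a $3$-dimensional space.

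Concretely, I would try to exhibit the relevant data as follows. Given the $2\times2$ counterexample pair $(A,B)$, form the diagonal $4\times4$ matrix $A\oplus B$ and look for a $3$-dimensional subspace $\cL\subset\bC^4$ that is invariant (or can be made invariant after a harmless modification) under the functional calculus we care about, together with a unital completely positive $\Phi\colon\bM_3\to\bM_2$ built by compressing the original $\Phi$ to $\cL$. Equivalently — and this is probably the cleaner route — I would look directly for $A,B\in\bP_3$ and an explicit unital CP map $\Psi\colon\bM_3\to\bM_2$ of the form $\Psi(X)=V^*XV$ with $V\colon\bC^2\to\bC^3$ an isometry, such that $\Psi(A^p)^{1/p}\not\le\Psi(A^q)^{1/q}$; here I would take $A$ itself to be block-diagonal $a\oplus C$ with $a>0$ a scalar and $C\in\bP_2$, so that $A^r=a^r\oplus C^r$, and choose $V$ so that $\Psi(a^r\oplus C^r)=\alpha a^r\,|u\rangle\langle u| + (\text{compression of }C^r)$ reproduces, after the right reparametrisation, the convex-combination structure $(1-\lambda)a^r + \lambda\langle$something$\rangle$ that drove the $2\times2$ example. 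The reformulation in \eqref{F-1.4} already shows that the power-mean inequality is a statement about compressions of diagonal matrices, so this is just a matter of bookkeeping the supports.

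The key steps, in order, are: (i) write out the Section~3 counterexample $(A,B)\in\bP_2\times\bP_2$ for a generic $(p,q)$ outside \eqref{F-2.3}; (ii) observe that in \eqref{F-1.4} the inequality \eqref{F-1.5} for the block map applied to $\mathrm{diag}(A,B)$ is exactly \eqref{F-1.3}, hence fails; (iii) show that $\mathrm{diag}(A,B)$, or a unitarily-equivalent and still-block-compatible modification of it, has all the powers $A^p,A^q$ (viewed as $\mathrm{diag}(A^p,B^p)$ etc.) simultaneously supported — after conjugation by a fixed unitary — in a way that lets one drop to a $3\times3$ corner; concretely, arrange that one of the four diagonal entries is redundant, e.g.\ by taking $B$ to have a repeated eigenvalue or by taking $A$ and $B$ to share structure so that the relevant $2\times2$ output of $\Phi$ only ``sees'' three of the four input dimensions; (iv) package the compression as a unital CP map $\bM_3\to\bM_2$ and conclude. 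Complete positivity is free, since every compression $X\mapsto V^*XV$ by an isometry is CP, and unitality is just $V^*V=I_2$.

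The main obstacle I anticipate is step (iii): a priori the extremal $2\times2$ pair $(A,B)$ genuinely uses all four eigenvalue-degrees of freedom of $\mathrm{diag}(A,B)$, and there is no reason a crude corner of it should still be a counterexample. The resolution I would pursue is to not reuse the Theorem~\ref{T-2.3} pair verbatim but to re-run the same failure-of-$2$-convexity analysis with one of the two matrices, say $B$, taken to be a scalar multiple of the identity $bI_2$ (or, in the $\bM_3$ picture, with $A=a\oplus C$ having a scalar block) — one still has two free parameters in $C\in\bP_2$ plus $a>0$, and that is exactly the amount of freedom ($3$ parameters, matching a $2$-convexity obstruction detected on a rank-one perturbation) one expects to need. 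The technical heart is then verifying that this slimmed-down family still breaks \eqref{F-1.5} for all $(p,q)$ outside \eqref{F-2.3}; I would expect this to reduce, via the operator-concavity/convexity criteria and a first-order (derivative-at-a-coincidence-point) computation, to precisely the scalar inequality characterising the boundary of the region \eqref{F-2.3}, i.e.\ to the same computation underlying Theorem~\ref{T-2.3}, now transported through the isometry $V$.
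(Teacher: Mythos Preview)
Your high-level instinct---that the $4\times4$ block input $A\oplus B$ should collapse to three dimensions because of special structure in the Section~3 counterexamples---is exactly right, but you have not identified what that structure is, and your concrete fallback proposals do not work.

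The structure you are looking for is already present and requires no modification: in the counterexamples of Sections~3.1--3.2 one has $A=\diag(1,x)$ and $B_\theta=U_\theta\,\diag(1,y)\,U_\theta^*$, so $A$ and $B_\theta$ \emph{share the eigenvalue $1$}; in Section~3.3 they share the eigenvalue $0$. Hence only three distinct eigenvalues appear, and one can take a single diagonal $Z\in\bP_3$ (namely $Z=\diag(1,x,y)$ in Cases~3.1--3.2 and $Z=\diag(2,1,0)$ in Case~3.3) together with the unital CP map
\[
\Phi(Z)\;=\;\frac{Z[i,j]+U_\theta\,Z[k,l]\,U_\theta^*}{2}
\]
(for suitable index pairs $\{i,j\},\{k,l\}$), so that $\Phi(Z^p)^{1/p}$ is literally $((A^p+B_\theta^p)/2)^{1/p}$. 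No new estimate is needed; the Section~3 computations transfer verbatim. This is the paper's argument, and it is a two-Kraus-operator map, not a single compression $V^*(\,\cdot\,)V$.

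Your proposed remedy of taking $B=bI_2$ is a genuine error: if $B$ is scalar then $(A^p+B^p)/2$ and $(A^q+B^q)/2$ are both functions of $A$, hence commute, and the matrix inequality \eqref{F-1.3} reduces to the scalar power-mean inequality, which holds for every $p\le q$. So no counterexample can arise this way. Your alternative single-isometry compression $\Psi(X)=V^*XV$ with $A=a\oplus C\in\bP_3$ is Choi's framework (cf.\ remark~(2) after Theorem~\ref{T-2.4}); it can produce counterexamples for specific $(p,q)$, but you give no argument that one choice of $a,C,V$ covers \emph{all} $(p,q)$ outside \eqref{F-2.3}, and this would require a fresh perturbative analysis rather than the reduction you intend.
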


Related to the above theorem, the following remarks are worth mentioning:

(1)\enspace In particular, when $q=1$, the above theorem says that the Jensen inequality
$\Phi(A)^p\le\Phi(A^p)$ holds for every unital (completely) positive linear map
$\Phi:\bM_3\to\bM_2$ and every $A\in\bP_3$ if and only if either $1\le p\le2$ or
$-1\le p\le0$.

(2)\enspace Choi \cite{Ch} gave a convenient counterexample when $\Phi:\bM_3\to\bM_2$ is
the compression map taking $A\in\bM_3$ to the $2\times2$ top left corner of $A$. Choi's
example is
$$
A:=\begin{bmatrix}1&0&1\\0&0&1\\1&1&1\end{bmatrix},
$$
for which $\Phi(A)^4\not\le\Phi(A^4)$. Since this $A$ is not positive definite, we take
$$
B:=A+I_3=\begin{bmatrix}2&0&1\\0&1&1\\1&1&2\end{bmatrix}>0.
$$
Then a numerical computation shows that the signs of the eigenvalues of
$\Phi(B^p)-\Phi(B)^p$ are
$$
\begin{cases}
-,+ & \text{if $p<-1$}, \\
+,+ & \text{if $-1<p<0$}, \\
-,- & \text{if $0<p<1$}, \\
+,+ & \text{if $1<p<2$}, \\
-,+ & \text{if $p>2$}.
\end{cases}
$$
Thus, $\Phi(B)^p\le\Phi(B^p)$ holds only if either $1\le p\le2$ or $-1\le p\le0$, and it
holds reversed only if $0\le p\le1$.

(3)\enspace The matrix sizes $3$ and $2$ in $\Phi:\bM_3\to\bM_2$ of Theorem \ref{T-2.4}
are minimal. Indeed, it is well-known that when $\varphi$ is a positive linear functional
on $\bM_n$, we have $f(\varphi(A))\le\varphi(f(A))$ for every Hermitian $A\in\bM_n$ and
every convex function $f$ defined on an interval containing the eigenvalues of $A$. Also,
it is known \cite[Theorem 2.2]{BS} that when $\Phi:\bM_2\to\bM_n$ is a unital positive
linear map, the inequality $f(\Phi(A))\le\Phi(f(A))$ holds true for every Hermitian
$A\in\bM_2$ and every convex function $f$ as above. Furthermore, we have the next result
showing that the situation is also similar for inequality \eqref{F-1.5}.

\begin{thm}\label{T-2.5}
Let $\Phi:\bM_2\to\bM_n$ be a unital positive linear map. Then \eqref{F-1.5} holds true
for every $A,B\in\bP_2$ and every $p,q\in\bR$ with $p\le q$.
\end{thm}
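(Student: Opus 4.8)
The plan is to reduce inequality \eqref{F-1.5} over this $2\times2$ domain to the classical scalar power-mean inequality for two positive numbers, exploiting that a Hermitian matrix in $\bM_2$ carries only two-point spectral data. First I would fix $A\in\bP_2$ (inequality \eqref{F-1.5} involves only the one operator $A$) and use its spectral decomposition $A=\lambda_1P_1+\lambda_2P_2$, where $\lambda_1,\lambda_2>0$ and $P_1,P_2$ are mutually orthogonal rank-one projections with $P_1+P_2=I_2$. Then $A^r=\lambda_1^rP_1+\lambda_2^rP_2$ for every $r\in\bR$ (with $A^0=I_2$), so linearity of $\Phi$ yields
\[
\Phi(A^r)=\lambda_1^rT+\lambda_2^r(I_n-T)=(\lambda_1^r-\lambda_2^r)T+\lambda_2^rI_n,
\]
where $T:=\Phi(P_1)$. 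Positivity and unitality of $\Phi$ give $0\le T\le I_n$, hence $\Phi(A^r)\ge\min\{\lambda_1^r,\lambda_2^r\}I_n>0$, consistent with the definition of the map \eqref{F-2.1}.

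The key observation is that, for every $r$, $\Phi(A^r)$ is an affine function of the single Hermitian matrix $T$, so $\Phi(A^p)$, $\Phi(A^q)$ and $T$ mutually commute. Diagonalizing $T=\sum_jt_jQ_j$ with $t_j\in[0,1]$ and mutually orthogonal projections $Q_j$ summing to $I_n$, one has
\[
\Phi(A^r)^{1/r}=\sum_j\bigl(t_j\lambda_1^r+(1-t_j)\lambda_2^r\bigr)^{1/r}Q_j\qquad(r\ne0),
\]
and $\exp\Phi(\log A)=\sum_j\lambda_1^{t_j}\lambda_2^{1-t_j}Q_j$ in the case $r=0$ by \eqref{F-2.2}. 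Thus \eqref{F-1.5} becomes equivalent to the family of scalar inequalities
\[
\bigl(t_j\lambda_1^p+(1-t_j)\lambda_2^p\bigr)^{1/p}\le\bigl(t_j\lambda_1^q+(1-t_j)\lambda_2^q\bigr)^{1/q}
\]
over the indices $j$ (with the weighted geometric mean $\lambda_1^{t_j}\lambda_2^{1-t_j}$ read in when $p$ or $q$ is $0$). Each side is the weighted power mean of order $p$, resp.\ $q$, of $\lambda_1,\lambda_2$ with weights $t_j,1-t_j$, and the classical power-mean inequality asserts precisely that this quantity is non-decreasing in the order; the cases $t_j\in\{0,1\}$ are trivial equalities. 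This yields the theorem.

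There is essentially no analytic obstacle, and the only points requiring care are: (i) confirming that the reduction genuinely produces mutually commuting operators, so that the operator inequality can be checked eigenvalue-by-eigenvalue of $T$; and (ii) the bookkeeping around $p=0$ or $q=0$, which is exactly the limiting weighted-geometric-mean situation already recorded in \eqref{F-2.2}. I would stress that only the positivity and unitality of $\Phi$, applied to the single element $P_1$, are needed---complete positivity plays no role---whereas for maps $\bM_3\to\bM_2$ the inequality genuinely fails outside the range \eqref{F-2.3}, as Theorem \ref{T-2.4} and Remark (2) show; the size $2$ of the domain is what makes this short argument go through.
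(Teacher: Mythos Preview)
Your proof is correct and follows essentially the same route as the paper's. Both arguments exploit that, for $A\in\bP_2$, every $\Phi(A^r)$ is an affine function of a single Hermitian operator---the paper parametrizes by $\Phi(A)$ via the divided-difference formula from \cite{BS}, you parametrize by $T=\Phi(P_1)$---so all the $\Phi(A^r)$ commute and the operator inequality reduces eigenvalue-by-eigenvalue to the scalar weighted power-mean inequality; your parametrization is the change of variable $t=(x-\lambda_2)/(\lambda_1-\lambda_2)$ of the paper's, and has the minor advantage of not needing the continuity argument for $\lambda_1=\lambda_2$.
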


\begin{proof}
The proof is similar to that of \cite[Theorem 2.2]{BS}. Let $p<q$ be arbitrary and let
$A\in\bP_2$. We may assume by continuity that $A$ has eigenvalues $\lambda_1>\lambda_2$
such that $\lambda_1\lambda_2^p\ne\lambda_2\lambda_1^p$ and
$\lambda_1\lambda_2^q\ne\lambda_2\lambda_1^q$. Then the computation in \cite{BS} gives
\begin{align*}
\Phi(A^p)&={\lambda_1^p-\lambda_2^p\over\lambda_1-\lambda_2}\,\Phi(A)
-{\lambda_2\lambda_1^p-\lambda_1\lambda_2^p\over\lambda_1-\lambda_2}, \\
\Phi(A^q)&={\lambda_1^q-\lambda_2^q\over\lambda_1-\lambda_2}\,\Phi(A)
-{\lambda_2\lambda_1^q-\lambda_1\lambda_2^q\over\lambda_1-\lambda_2}.
\end{align*}
Since $\lambda_2I_n\le\Phi(A)\le\lambda_1I_n$, the result follows since
$$
\biggl({\lambda_1^p-\lambda_2^p\over\lambda_1-\lambda_2}\,x
-{\lambda_2\lambda_1^p-\lambda_1\lambda_2^p\over\lambda_1-\lambda_2}\biggr)^{1/p}
\le\biggl({\lambda_1^q-\lambda_2^q\over\lambda_1-\lambda_2}\,x
-{\lambda_2\lambda_1^q-\lambda_1\lambda_2^q\over\lambda_1-\lambda_2}\biggr)^{1/q},
$$
that is,
$$
\biggl({x-\lambda_2\over\lambda_1-\lambda_2}\,\lambda_1^p
+{\lambda_1-x\over\lambda_1-\lambda_2}\,\lambda_2^p\biggr)^{1/p}
\le\biggl({x-\lambda_2\over\lambda_1-\lambda_2}\,\lambda_1^q
+{\lambda_1-x\over\lambda_1-\lambda_2}\,\lambda_2^q\biggr)^{1/q}
$$
for any $x\in[\lambda_2,\lambda_1]$.
\end{proof}

In the rest of the section we explain what motivated us to prove the optimality of
conditions \eqref{F-2.3} for the validity of \eqref{F-1.3}. In \cite{Hi2} we discussed
joint concavity/convexity of the trace function
$$
(A,B)\in\bP_n\times\bP_m\longmapsto
\Tr\bigl\{\Phi(A^p)^{1/2}\Psi(B^q)\Phi(A^p)^{1/2}\bigr\}^s,
$$
where $p,q,s$ are real parameters, $n,m,l\in\bN$, and $\Phi:\bM_n\to\bM_l$ and
$\Psi:\bM_m\to\bM_l$ are (strictly) positive linear maps. We are interested in extending
concavity/convexity results under trace to those under symmetric (anti-) norms. (The notion
of symmetric anti-norms was introduced in \cite{BH}.) For instance,
we are interested in joint convexity of the norm function
$$
(A,B)\in\bP_n\times\bP_m\longmapsto
\big\|\big\{\Phi(A^p)^{1/2}\Psi(B^q)\Phi(A^p)^{1/2}\bigr\}^s\big\|,
$$
where $\|\cdot\|$ is a symmetric norm on $\bM_l$. This joint convexity for any symmetric
norm can be reduced to that for the Ky Fan $k$-norms for $k=1,\dots,l$. Although the
problem for all Ky Fan norms seems difficult, we could settle in \cite{Hi2} the special
case where $k=1$, i.e., $\|\cdot\|$ is the operator norm $\|\cdot\|_\infty$ (another
special case where $k=l$ is the original situation under trace). In \cite{Hi2} we proved

\begin{thm}\label{T-2.6}
Under the above assumption, the function
$$
(A,B)\in\bP_n\times\bP_m\longmapsto
\big\|\bigl\{\Phi(A^p)^{1/2}\Psi(B^q)\Phi(A^p)^{1/2}\bigr\}^s\big\|_\infty
$$
is jointly convex if one of the following six conditions is satisfied:
\begin{equation}\label{F-2.4}
\begin{cases}
-1\le p,q\le0\ \mbox{and}\ s>0, \\
-1\le p\le0,\ 1\le q\le2,\ p+q>0\ \mbox{and}\ s\ge1/(p+q), \\
1\le p\le2,\ -1\le q\le0,\ p+q>0\ \mbox{and}\ s\ge1/(p+q),
\end{cases}
\end{equation}
and their counterparts where $(p,q,s)$ is replaced with $(-p,-q,-s)$.
\end{thm}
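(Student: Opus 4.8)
The plan is to derive the joint convexity from two ingredients: the convexity of a family of sublevel sets, read off from the variational description of the operator norm, and positive homogeneity, which turns this into genuine convexity whenever the homogeneity degree is at least one. Write $M(A,B):=\Phi(A^p)^{1/2}\Psi(B^q)\Phi(A^p)^{1/2}\in\bP_l$, so the function in question is $f(A,B)=\|M(A,B)^s\|_\infty$. Since $M(tA,tB)=t^{p+q}M(A,B)$ for $t>0$, $f$ is positively homogeneous of degree $d:=(p+q)s$, a quantity invariant under $(p,q,s)\mapsto(-p,-q,-s)$. For $s>0$ one has, after the substitution $\eta=\Phi(A^p)^{1/2}\xi$ in $\langle\xi,M(A,B)\xi\rangle\le c^{1/s}\|\xi\|^2$,
$$
f(A,B)\le c\ \Longleftrightarrow\ M(A,B)\le c^{1/s}I\ \Longleftrightarrow\ \langle\eta,\Psi(B^q)\eta\rangle\le c^{1/s}\langle\eta,\Phi(A^p)^{-1}\eta\rangle\ \ \text{for all }\eta,
$$
and for $s<0$ a parallel characterization holds with the inequalities reversed and the least eigenvalue of $M(A,B)$ in place of the largest.

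I would first dispose of the second and third families in \eqref{F-2.4}, together with their $(-p,-q,-s)$-counterparts, all of which have $d=(p+q)s\ge1$. For these it is enough to prove that $f$ is \emph{quasiconvex}, i.e.\ that $\{(A,B):\Phi(A^p)^{1/2}\Psi(B^q)\Phi(A^p)^{1/2}\le cI\}$ is convex for every $c>0$: then $g:=f^{1/d}$ is nonnegative, positively homogeneous of degree one, and has convex sublevel sets, hence equals the Minkowski gauge of $\{g\le1\}$ and is convex, so $f=g^{d}$ is convex as the composite of the convex increasing map $t\mapsto t^{d}$ with $g$. For the first family $-1\le p,q\le0$, $s>0$ (and its counterpart $0\le p,q\le1$, $s<0$), the degree $d$ is $\le0$ and this device fails; there I would instead show that the $s=1$ function $g(A,B)=\|\Phi(A^p)^{1/2}\Psi(B^q)\Phi(A^p)^{1/2}\|_\infty$ is jointly \emph{log-convex}, from which $f=g^{s}=\exp(s\log g)$ is jointly convex for every $s>0$ (and likewise for $s<0$).

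The substantive point — and the step I expect to be the main obstacle — is the underlying operator inequality, namely the convexity of those sublevel sets and the log-convexity of $g$. The available input is that $t\mapsto t^{p}$ is operator convex for $p\in[-1,0]$ and $t\mapsto t^{q}$ is operator convex for $q\in[-1,0]$ and for $q\in[1,2]$, so $A\mapsto\Phi(A^p)$ and $B\mapsto\Psi(B^q)$ are operator convex, together with the fact that $\|X^{1/2}YX^{1/2}\|_\infty=\|Y^{1/2}XY^{1/2}\|_\infty$ is operator monotone in each of $X$ and $Y$. The naive ``operator convex $\circ$ operator convex'' argument does \emph{not} suffice: from $\Phi(A_\lambda^p)\le(1-\lambda)\Phi(A_0^p)+\lambda\Phi(A_1^p)$ one cannot pass to the reversed inequality for the inverses, since $A\mapsto\Phi(A^p)^{-1}$ is not operator concave. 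The way around this is to use the dual variational identity $\langle\eta,X^{-1}\eta\rangle=\sup_\xi\{2\,\mathrm{Re}\langle\xi,\eta\rangle-\langle\xi,X\xi\rangle\}$ and to build, from near-optimal $\xi$ at the two endpoints, a test vector at the midpoint that verifies the required inequality — equivalently, to reduce the statement to a Lieb--Ando-type joint concavity/convexity theorem for a suitable auxiliary matrix function. It is exactly here that the restrictions in \eqref{F-2.4} are forced; making the argument close on precisely those ranges, and handling the counterpart cases (where $t\mapsto t^{q}$ for $q\in(-2,-1)$ is neither operator convex nor operator concave, so a separate but parallel treatment is needed), is the delicate work.

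Finally, the thresholds should be checked against the scalar specialization $\Phi=\Psi=\mathrm{id}$, $n=m=l=1$, where $f(A,B)=A^{ps}B^{qs}$ on $(0,\infty)^2$: a Hessian computation shows that $A^{a}B^{b}$ is jointly convex precisely when $a,b\le0$, or $a\ge1$, $b\le0$, $a+b\ge1$, or $a\le0$, $b\ge1$, $a+b\ge1$. With $a=ps$, $b=qs$ this matches \eqref{F-2.4} and exhibits the role of the conditions $p+q>0$ and $s\ge1/(p+q)$, the further restriction of $p,q$ to $[-1,2]$ in the matrix statement being the imprint of operator convexity of $t\mapsto t^{p}$. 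Assembling the operator inequality with the homogeneity reduction — respectively with $f=\exp(s\log g)$ — then gives Theorem \ref{T-2.6}.
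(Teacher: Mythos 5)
This theorem is not proved in the present paper at all: it is quoted from the reference [Hi2] (arXiv:1210.7524), so there is no in-paper argument to compare yours with. Judged on its own terms, your proposal is a reduction scheme, not a proof, and the gap sits exactly where you say you ``expect the main obstacle'' to be. The homogeneity/gauge device and the identity $f=g^{s}$ with $g=\lambda_{\max}(M(A,B))$ are correct but only repackage the statement: for the families with $(p+q)s\ge1$ everything is reduced to the convexity of the sets $\{(A,B):\Psi(B^q)\le c\,\Phi(A^p)^{-1}\}$ (resp.\ $\{M(A,B)\ge c I\}$ when $s<0$), and for the family $-1\le p,q\le0$, $s>0$ to the joint log-convexity of $g$; but neither of these is established. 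Worse, the second reduction gains nothing: since the first family asserts convexity of $g^{s}$ for \emph{every} $s>0$, letting $s\to0^{+}$ shows that joint log-convexity of $g$ is \emph{equivalent} to that case of the theorem, so you have merely restated it. The proposed mechanism for closing the gap --- the dual identity $\langle\eta,X^{-1}\eta\rangle=\sup_{\xi}\{2\,\mathrm{Re}\langle\xi,\eta\rangle-\langle\xi,X\xi\rangle\}$ together with ``near-optimal test vectors at the endpoints'' or a reduction ``to a Lieb--Ando-type theorem for a suitable auxiliary matrix function'' --- is never executed: no midpoint test vector is constructed, no auxiliary function is exhibited, and no concavity/convexity theorem is invoked with its hypotheses checked. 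Since this is precisely the step where the restrictions $p,q\in[-1,2]$, $p+q>0$, $s\ge1/(p+q)$ must enter (your scalar Hessian check only shows they are necessary, not sufficient in the matrix setting), the proposal does not yet prove the theorem. You also correctly flag, but do not resolve, the counterpart cases where $t\mapsto t^{q}$ with $q\in(-2,-1)$ is neither operator convex nor operator concave, so even the first, ``naive'' half of the sublevel-set argument is unavailable there.

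If you want to complete this along the lines you sketch, the concrete statement you must prove is: for $(p,q)$ in the indicated ranges and any $c>0$, if $\Psi(B_i^q)\le c\,\Phi(A_i^p)^{-1}$ for $i=0,1$ then the same holds at $(A_\lambda,B_\lambda)$; equivalently, a joint operator concavity statement for $(A,B)\mapsto$ (a suitable mean of $\Phi(A^p)^{-1}$ against $\Psi(B^q)$). That is the actual content of [Hi2], and until it is supplied your argument is circular at its core.
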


Moreover, for the optimality of the above conditions in \eqref{F-2.4} for $(p,q,s)$ we
proved

\begin{thm}\label{T-2.7}
The function
\begin{equation}\label{F-2.5}
(A,B)\in\bP_n\times\bP_n\longmapsto\|(A^{p/2}B^qA^{p/2})^s\|_\infty
\end{equation}
is jointly convex for every $n\in\bN$ (or equivalently, for fixed $n=2$) if and only if
$(p,q,s)$ satisfies one of the conditions in \eqref{F-2.4} and their counterparts of
$(-p,-q,-s)$ in place of $(p,q,s)$.
\end{thm}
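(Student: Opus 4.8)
\medskip\noindent\textbf{Proof plan.}
The implication ``$\Leftarrow$'' is immediate from Theorem~\ref{T-2.6}: taking $n=m=l$ and $\Phi=\Psi=\mathrm{id}_{\bM_n}$ gives $\Phi(A^p)^{1/2}\Psi(B^q)\Phi(A^p)^{1/2}=A^{p/2}B^qA^{p/2}$, so \eqref{F-2.5} is jointly convex on $\bP_n\times\bP_n$ for every $n$ (in particular $n=2$) whenever $(p,q,s)$ lies in \eqref{F-2.4} or its $(-p,-q,-s)$-counterpart. Thus the whole content is the reverse implication, and my plan is to deduce, from joint convexity of \eqref{F-2.5} on $\bP_2\times\bP_2$ alone, that $(p,q,s)$ lies in that range.

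Write $F_{p,q,s}(A,B):=\|(A^{p/2}B^qA^{p/2})^s\|_\infty$ and $C:=A^{p/2}B^qA^{p/2}\in\bP_2$. First I would make two reductions. Since $A^{-p/2}B^{-q}A^{-p/2}=(A^{p/2}B^qA^{p/2})^{-1}=C^{-1}$ and $(C^{-1})^{-s}=C^{s}$, one has $F_{-p,-q,-s}\equiv F_{p,q,s}$, so it suffices to treat $s>0$ (the case $s=0$ being trivial). Moreover $C=M^{*}M$ while $B^{q/2}A^pB^{q/2}=MM^{*}$ for $M=B^{q/2}A^{p/2}$, so these matrices are cospectral and $F_{p,q,s}(A,B)=F_{q,p,s}(B,A)$; hence the admissible set of $(p,q,s)$ is symmetric in $p\leftrightarrow q$, and it is enough to exclude everything outside the union of the first two conditions in \eqref{F-2.4}. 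A first batch of necessary conditions then comes from \emph{commuting} arguments: restricting $F_{p,q,s}$ to diagonal $A,B$ on which the operator norm of $A^{p/2}B^qA^{p/2}$ is attained at a fixed coordinate, joint convexity forces joint convexity of $(a,b)\mapsto a^{ps}b^{qs}$ on $(0,\infty)^2$; the Hessian of $a^\alpha b^\beta$ is positive semidefinite exactly when $\{\alpha\le0,\ \beta\le0\}$, $\{\alpha\ge1,\ \beta\le0,\ \alpha+\beta\ge1\}$, or $\{\beta\ge1,\ \alpha\le0,\ \alpha+\beta\ge1\}$, which with $(\alpha,\beta)=(ps,qs)$ and $s>0$ gives: $p,q\le0$; or $q\le0<p$ and $(p+q)s\ge1$; or $p\le0<q$ and $(p+q)s\ge1$. (The restriction $A=B$ adds $(p+q)s\le0$ or $\ge1$, already contained here; equivalently this whole batch is what the second-order expansion of $F_{p,q,s}$ at $A=B=I$ yields along commuting directions.)

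The remaining constraints — $-1\le p,q\le0$ in the first case and $-1\le p\le0$ with $1\le q\le2$ in the second — are genuinely non-commutative. I would obtain them from the second-order expansion of $F_{p,q,s}$ at generic non-degenerate positive definite base points $(A_0,B_0)$, along Hermitian perturbation directions $(H,K)$ with nonzero off-diagonal parts: expand the power functions $A\mapsto A^{p/2}$ and $B\mapsto B^q$ to second order using their first and second divided differences, assemble $C(t)=A^{p/2}B^qA^{p/2}$ to order $t^2$, apply second-order perturbation theory to its now-simple largest eigenvalue (so the usual second-order resolvent corrections enter), raise to the power $s$, and require the resulting quadratic form in the entries of $(H,K)$ to be positive semidefinite for all base points. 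Driving the base-point eigenvalue ratios to their extremes (for instance letting $B_0$ become highly anisotropic, or nearly scalar, with the $A$-directions active) isolates one-variable inequalities, and two arithmetic facts produce the endpoints: positivity of the second divided difference of $t\mapsto t^{p/2}$ is exactly $2$-convexity of $t^p$, hence — by Theorem~\ref{T-2.3} with $q=1$, i.e.\ \cite[Proposition~3.1]{HT} — equivalent to $p\in[-1,0]\cup[1,2]$ (and symmetrically $q\in[-1,0]\cup[1,2]$); and the second derivative of $t\mapsto t^{p/2}$ vanishes exactly at $p=0,2$, so the sign it contributes to the relevant coefficient flips at $p=2$, which — combined with the cross term between $A$- and $B$-perturbations — excludes $p>2$, $q>2$ and, together with the constraints already secured, forces $p\ge-1$, $q\ge-1$. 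Intersecting with the three cases above leaves precisely the first two conditions of \eqref{F-2.4}, completing (with the $p\leftrightarrow q$ symmetry and the reduction to $s>0$) the reverse implication.

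The hard part is this last step: carrying out the second-order expansion of $A\mapsto A^{p/2}$ about a generic positive definite $2\times2$ matrix together with the second-order perturbation of the top eigenvalue, and then checking that the quadratic form obtained is positive semidefinite on \eqref{F-2.4} \emph{and on no larger set} — equivalently, producing, for each $(p,q,s)$ outside \eqref{F-2.4} and its counterpart, an explicit $2\times2$ direction along which the form is negative. The reduction to $s>0$, the $p\leftrightarrow q$ symmetry, and the commuting/diagonal analysis are, by contrast, routine.
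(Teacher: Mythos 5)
Your ``if'' direction and your preliminary reductions are fine and essentially match what is needed: the identity $F_{-p,-q,-s}=F_{p,q,s}$ (via $(C^{-1})^{-s}=C^{s}$), the $p\leftrightarrow q$ symmetry via cospectrality of $M^{*}M$ and $MM^{*}$, and the diagonal analysis forcing joint convexity of $(a,b)\mapsto a^{ps}b^{qs}$ (hence $p,q\le0$, or a sign split together with $(p+q)s\ge1$) are all correct. But for the ``only if'' part you take a route that is both different from the paper's and, crucially, not carried out. The paper does not attack the joint-convexity question by direct perturbation at all: its proof observes that joint convexity of \eqref{F-2.5} (for $p,q\ne0$, $s>0$) \emph{implies the operator inequality between power means}
$$
\biggl({A^{1/q}+B^{1/q}\over2}\biggr)^{q}\le\biggl({A^{-1/p}+B^{-1/p}\over2}\biggr)^{-p}
\quad\text{for all }A,B\in\bP_n,
$$
and then invokes Theorem \ref{T-2.3} — whose proof via the $2\times2$ counterexamples of Section 3 is the entire technical content of this paper — to restrict $(p,q)$ to the optimal range. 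You never find this reduction, so your argument cannot lean on Theorem \ref{T-2.3}, and you are left having to redo from scratch the kind of second-order Fr\'echet-derivative analysis that occupies Lemmas \ref{L-3.1}--\ref{L-3.3}.

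That is where the genuine gap lies. The constraints your commuting analysis does \emph{not} deliver — $p\ge-1$, $q\ge-1$, $q\le2$, and the exclusion of $0<p<1$ in the mixed-sign case — are exactly the substance of the theorem, and your proposal defers all of them to an unexecuted computation: ``expand to second order, apply second-order perturbation theory to the top eigenvalue, drive the base point to its extremes, and check that the quadratic form is PSD on \eqref{F-2.4} and on no larger set.'' You assert that ``two arithmetic facts produce the endpoints,'' but the claimed link between the sign of the second divided difference of $t\mapsto t^{p/2}$, the $2$-convexity of $t^{p}$, and the positivity of the eigenvalue quadratic form is never established; nothing guarantees that the limiting one-variable inequalities you extract are exactly those characterizing $[-1,0]\cup[1,2]$, rather than something weaker. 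Since you yourself flag this as ``the hard part,'' the proposal is a plausible plan rather than a proof: the decisive step is missing, and it is precisely the step the paper handles by routing through the power-mean inequality and Theorem \ref{T-2.3}.
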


The ``if\," part of Theorem \ref{T-2.7} is an obvious special case of Theorem \ref{T-2.6}.
To prove the ``only if\," part, we observed that, for each $n\in\bN$, $p,q\ne0$ and $s>0$,
if \eqref{F-2.5} is jointly convex then
$$
\biggl({A^{1/q}+B^{1/q}\over2}\biggr)^q\le
\biggl({A^{-1/p}+B^{-1/p}\over2}\biggr)^{-p}
$$
holds for every $A,B\in\bP_n$. In this way, the matrix inequality between the power means
shows up, and the restriction on $(p,q)$ obtained in Theorem \ref{T-2.3} is crucial to
prove Theorem \ref{T-2.7}. So we need to prove Theorem \ref{T-2.3} to complete the proof
of Theorem \ref{T-2.7} in \cite{Hi2}, which is our main motivation here, though Theorem
\ref{T-2.3} is certainly of independent interest.

\section{Counterexamples}

This section is mostly devoted to the proof of Theorem \ref{T-2.3} by
constructing counterexamples. It is obvious that the condition $p\le q$ is necessary for
\eqref{F-1.3} to hold for the numerical function (i.e., for $A=aI$ and $B=bI$ with
$a,b\in(0,\infty)$). From the obvious identities
\begin{align*}
\biggl({A^p+B^p\over2}\biggr)^{1/p}
&=\biggl\{\biggl({(A^{-1})^{-p}+(B^{-1})^{-p}\over2}\biggr)^{-1/p}\biggr\}^{-1},
\qquad p\ne0, \\
\exp\biggl({\log A+\log B\over2}\biggr)
&=\biggl\{\exp\biggl({\log A^{-1}+\log B^{-1}\over2}\biggr)\biggr\}^{-1},
\end{align*}
it is also obvious that, for each $n\in\bN$, \eqref{F-1.3} holds for every $A,B\in\bP_n$
if and only if \eqref{F-1.3} with $(-q,-p)$ in place of $(p,q)$ holds for every
$A,B\in\bP_n$. Therefore, it suffices to provide counterexamples for any $(p,q)$ such that
either $-1<p<1/2$ and $q>\max\{0,p\}$, or $1/2\le p<q<1$. Below we divide our job into
three cases which cover all of such $(p,q)$.

\subsection{Case $-1<p<1/2$, $p\ne0$ and $q>\max\{0,p\}$}

For each $x,y>0$ and $\theta\in\bR$ define $A,B_\theta\in\bP_2$ by
$$
A:=\begin{bmatrix}1&0\\0&x\end{bmatrix},\qquad
B_\theta:=\begin{bmatrix}\cos\theta&-\sin\theta\\\sin\theta&\cos\theta\end{bmatrix}
\begin{bmatrix}1&0\\0&y\end{bmatrix}
\begin{bmatrix}\cos\theta&\sin\theta\\-\sin\theta&\cos\theta\end{bmatrix}.
$$

\begin{lemma}\label{L-3.1}
Let $p,q\in\bR\setminus\{0\}$ and $x,y>0$ be such that $x^p+y^p\ne2$, $x^q+y^q\ne2$ and
$((x^p+y^p)/2)^{1/p}\ne((x^q+y^q)/2)^{1/q}$. Then we have
\begin{align*}
&\det\biggl\{\biggl({A^q+B_\theta^q\over2}\biggr)^{1/q}
-\biggl({A^p+B_\theta^p\over2}\biggr)^{1/p}\biggr\} \\
&=\theta^2\Biggl[{1\over2}
\biggl\{{(1-x^p)(1-y^p)\over p(2-x^p-y^p)}-{(1-x^q)(1-y^q)\over q(2-x^q-y^q)}\biggr\}
\biggl\{\biggl({x^q+y^q\over2}\biggr)^{1/q}-\biggl({x^p+y^p\over2}\biggr)^{1/p}\biggr\} \\
&\qquad\quad-\biggl\{{1-y^p\over2-x^p-y^p}-{1-y^q\over2-x^q-y^q}\biggr\}^2
\biggl\{1-\biggl({x^p+y^p\over2}\biggr)^{1/p}\biggr\}
\biggl\{1-\biggl({x^q+y^q\over2}\biggr)^{1/q}\biggr\}\Biggr] \\
&\qquad+o(\theta^2)\quad\mbox{as $\theta\to0$}.
\end{align*}
\end{lemma}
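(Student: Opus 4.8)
The plan is a second-order Taylor expansion in $\theta$ about $\theta=0$, where $A$ and $B_0$ are simultaneously diagonal. Writing $R_\theta=\cos\theta\,I+\sin\theta\,J$ with $J=\left[\begin{smallmatrix}0&-1\\1&0\end{smallmatrix}\right]$, one has $B_\theta^p=R_\theta\,\diag(1,y^p)\,R_\theta^{\mathrm T}$, so that expanding $\cos\theta$ and $\sin\theta$ produces
\[
C_p(\theta):=\frac{A^p+B_\theta^p}2
=\begin{bmatrix}1&0\\0&b_p\end{bmatrix}
+\theta v_p\begin{bmatrix}0&1\\1&0\end{bmatrix}
+\theta^2 v_p\begin{bmatrix}-1&0\\0&1\end{bmatrix}+O(\theta^3),
\]
where $b_p:=(x^p+y^p)/2$ and $v_p:=(1-y^p)/2$. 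Two structural facts are worth recording at the outset: $\Tr C_p(\theta)=1+b_p$ for every $\theta$, and $C_p(-\theta)=\diag(1,-1)\,C_p(\theta)\,\diag(1,-1)$, so that every quantity built symmetrically from the two eigenvalues of $C_p(\theta)$ — in particular $\det C_p(\theta)$ — as well as the determinant on the left of the asserted identity are even functions of $\theta$; hence the $o(\theta^2)$ remainder there is really $O(\theta^4)$. From the display one also gets $\det C_p(\theta)=b_p+\tfrac14(1-x^p)(1-y^p)\,\theta^2+O(\theta^4)$.

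Since $x^p+y^p\ne2$, $C_p(0)$ has simple spectrum $\{1,b_p\}$, so for $\theta$ small the eigenvalue $\mu_1(\theta)$ near $1$, the eigenvalue $\mu_2(\theta)$ near $b_p$, and the orthogonal spectral projections $\Pi_1(\theta)$, $\Pi_2(\theta)=I-\Pi_1(\theta)$ depend analytically on $\theta$. Writing $\mu_1=1+\varepsilon$, $\mu_2=b_p-\varepsilon$ and using $\mu_1\mu_2=\det C_p(\theta)$ gives $\varepsilon=-\tfrac p2 L_p\,\theta^2+O(\theta^4)$ with $L_p:=\dfrac{(1-x^p)(1-y^p)}{p(2-x^p-y^p)}$, while solving $(C_p(\theta)-\mu_1)v=0$ through its second row gives the eigenvector $\left[\begin{smallmatrix}1\\ s_p\theta\end{smallmatrix}\right]+O(\theta^2)$ with $s_p:=\dfrac{1-y^p}{2-x^p-y^p}$. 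Substituting these into $C_p(\theta)^{1/p}=\mu_1^{1/p}\Pi_1+\mu_2^{1/p}\Pi_2$ and abbreviating $w_p:=1-b_p^{1/p}=1-\bigl((x^p+y^p)/2\bigr)^{1/p}$, the entries of $M_p(\theta):=C_p(\theta)^{1/p}$ are found to be
\[
(M_p)_{11}=1-\Bigl(\tfrac{L_p}2+s_p^2w_p\Bigr)\theta^2+O(\theta^4),\quad
(M_p)_{12}=w_ps_p\,\theta+O(\theta^3),\quad
(M_p)_{22}=b_p^{1/p}+\Bigl(s_p^2w_p+\tfrac12 b_p^{1/p-1}L_p\Bigr)\theta^2+O(\theta^4).
\]
Note that the $-\tfrac{L_p}2\theta^2$ part of $(M_p)_{11}$ is the effect of the eigenvalue shift and $-s_p^2w_p\theta^2$ that of the eigenvector tilt; the sign of $1-b_p$ never matters because everything is phrased through the projection onto the eigenvalue near $1$.

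All of the above holds verbatim with $q$ in place of $p$ (using $x^q+y^q\ne2$). Hence $D(\theta):=M_q(\theta)-M_p(\theta)$, which is symmetric, has
\[
D_{11}(\theta)=\Bigl[\bigl(\tfrac{L_p}2+s_p^2w_p\bigr)-\bigl(\tfrac{L_q}2+s_q^2w_q\bigr)\Bigr]\theta^2+O(\theta^4),\quad
D_{12}(\theta)=(w_qs_q-w_ps_p)\,\theta+O(\theta^3),
\]
and $D_{22}(0)=b_q^{1/q}-b_p^{1/p}=w_p-w_q=:\delta$ (nonzero by hypothesis), so that $\det D(\theta)=D_{11}D_{22}-D_{12}^2=\bigl(\Delta_{11}\,\delta-(w_qs_q-w_ps_p)^2\bigr)\theta^2+o(\theta^2)$, where $\Delta_{11}$ is the bracket just displayed. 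The proof is then closed by the elementary identity
\[
(w_qs_q-w_ps_p)^2=(s_p-s_q)^2w_pw_q+(w_p-w_q)(w_ps_p^2-w_qs_q^2),
\]
which telescopes the $\theta^2$-coefficient to $\tfrac12(L_p-L_q)\,\delta-(s_p-s_q)^2w_pw_q$; unwinding the abbreviations $L_p,s_p,w_p,\delta$ gives exactly the bracketed expression in the statement. The main obstacle is not a single step but the bookkeeping — tracking the two independent $O(\theta^2)$ effects (eigenvalue shift and eigenvector rotation) through the $2\times2$ functional calculus without sign errors, and recognizing the identity that recombines $D_{12}^2$ with the $D_{11}D_{22}$ term into precisely the asserted form.
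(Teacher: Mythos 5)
Your proof is correct, and I verified the key computations: the expansion of $C_p(\theta)=(A^p+B_\theta^p)/2$, the constant trace $1+b_p$, the determinant $b_p+\tfrac14(1-x^p)(1-y^p)\theta^2+O(\theta^4)$, the eigenvalue shift $\varepsilon=-\tfrac p2L_p\theta^2$, the eigenvector tilt $s_p\theta$, the resulting entries of $M_p(\theta)$ (which coincide exactly with the paper's $\alpha_p^{(1,1)}=-\tfrac{L_p}2-s_p^2w_p$ and $\alpha_p^{(1,2)}=w_ps_p$), and the closing identity $(w_qs_q-w_ps_p)^2=(s_p-s_q)^2w_pw_q+(w_p-w_q)(w_ps_p^2-w_qs_q^2)$, which indeed telescopes the coefficient to $\tfrac12(L_p-L_q)(w_p-w_q)-(s_p-s_q)^2w_pw_q$, i.e.\ the bracket in the statement.

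The route differs from the paper's in how the second-order expansion of $C_p(\theta)^{1/p}$ is obtained. The paper writes $A^p+B_\theta^p=G+\theta H+\theta^2K+o(\theta^2)$ and applies the Taylor formula with Fr\'echet derivatives, evaluating the first and second derivatives of $X\mapsto X^{1/p}$ at $G$ via the Daleckii--Krein divided-difference formula; this requires computing the second divided differences $(x^{1/p})^{[2]}$ and citing the corresponding theorems on derivatives of matrix functions. You instead exploit two structural features special to this $2\times2$ family --- the trace of $C_p(\theta)$ is constant in $\theta$, and $C_p(-\theta)$ is unitarily conjugate to $C_p(\theta)$ --- to read the perturbed eigenvalues off the determinant alone and the perturbed eigenvector off one row of the eigenvalue equation, then assemble $C_p(\theta)^{1/p}$ by elementary $2\times2$ spectral calculus. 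This is more self-contained (no matrix-function differentiation theory, no second divided differences) and the parity argument even upgrades the remainder to $O(\theta^4)$; the price is that it does not generalize beyond the $2\times2$ rank-one-rotation setting, whereas the Fr\'echet-derivative computation is routine in any dimension. The final algebraic simplification is essentially the paper's splitting into $\Delta_1+\Delta_2$ with the same substitution $w_p=1-((x^p+y^p)/2)^{1/p}$, just packaged as a single polynomial identity in $s_p,s_q,w_p,w_q$.
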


\begin{proof}
We have
\begin{align*}
A^p+B_\theta^p
&=\begin{bmatrix}2-(1-y^p)\sin^2\theta&(1-y^p){\sin2\theta\over2}\\
(1-y^p){\sin2\theta\over2}&x^p+y^p+(1-y^p)\sin^2\theta\end{bmatrix} \\
&=G+\theta H+\theta^2 K+o(\theta^2),
\end{align*}
where
$$
G:=\begin{bmatrix}2&0\\0&x^p+y^p\end{bmatrix},\quad
H:=\begin{bmatrix}0&1-y^p\\1-y^p&0\end{bmatrix},\quad
K:=\begin{bmatrix}-(1-y^p)&0\\0&1-y^p\end{bmatrix}.
$$
We apply the Taylor formula with Fr\'echet derivatives (see e.g.,
\cite[Theorem 2.3.1]{Hi1}) to obtain
$$
(A^p+B_\theta^p)^{1/p}
=G^{1/p}+D(x^{1/p})(G)(\theta H+\theta^2 K)
+{1\over2}D^2(x^{1/p})(G)(\theta H,\theta H)+o(\theta^2),
$$
where the second and the third terms in the right-hand side are the first and the second
Fr\'echet derivatives of $X\in\bP_2\mapsto X^{1/p}\in\bP_2$ at $G$, respectively. By
Daleckii and Krein's derivative formula (see \cite[Theorem V.3.3]{Bh},
\cite[Theorem 2.3.1]{Hi1}) we have
\begin{align*}
&D(x^{1/p})(G)(\theta H+\theta^2 K) \\
&\qquad=\begin{bmatrix}(x^{1/p})^{[1]}(2,2)&(x^{1/p})^{[1]}(2,x^p+y^p)\\
(x^{1/p})^{[1]}(2,x^p+y^p)&(x^{1/p})^{[1]}(x^p+y^p,x^p+y^p)\end{bmatrix}
\circ(\theta H+\theta^2 K) \\
&\qquad=\begin{bmatrix}{1\over p}2^{{1\over p}-1}
&{2^{1/p}-(x^p+y^p)^{1/p}\over2-x^p-y^p}\\
{2^{1/p}-(x^p+y^p)^{1/p}\over2-x^p-y^p}
&{1\over p}(x^p+y^p)^{{1\over p}-1}\end{bmatrix}
\circ(\theta H+\theta^2 K) \\
&\qquad=\theta\begin{bmatrix}
0&{2^{1/p}-(x^p+y^p)^{1/p}\over2-x^p-y^p}(1-y^p)\\
{2^{1/p}-(x^p+y^p)^{1/p}\over2-x^p-y^p}(1-y^p)&0\end{bmatrix} \\
&\qquad\qquad+\theta^2\begin{bmatrix}-{1\over p}2^{{1\over p}-1}(1-y^p)&0\\
0&{1\over p}(x^p+y^p)^{{1\over p}-1}(1-y^p)\end{bmatrix},
\end{align*}
where $(x^{1/p})^{[1]}$ denotes the first divided difference of $x^{1/p}$ and $\circ$ means
the Schur (or Hadamard) product. For the second divided difference of $x^{1/p}$ we compute
\begin{align*}
(x^{1/p})^{[2]}(2,2,x^p+y^p)
&={\bigl({1\over p}-1\bigr)2^{1/p}-{1\over p}2^{{1\over p}-1}(x^p+y^p)
+(x^p+y^p)^{1/p}\over(2-x^p-y^p)^2}, \\
(x^{1/p})^{[2]}(2,x^p+y^p,x^p+y^p)
&={2^{1/p}-{2\over p}(x^p+y^p)^{{1\over p}-1}
+\bigl({1\over p}-1\bigr)(x^p+y^p)^{1/p}\over(2-x^p-y^p)^2},
\end{align*}
and hence we have
\begin{align*}
&{1\over2}D^2(x^{1/2})(G)(\theta H,\theta H) \\
&=\theta^2\begin{bmatrix}
{({1\over p}-1)2^{1/p}-{1\over p}2^{{1\over p}-1}(x^p+y^p)+(x^p+y^p)^{1/p}
\over(2-x^p-y^p)^2}(1-y^p)^2&0\\
0&{2^{1/p}-{2\over p}(x^p+y^p)^{{1\over p}-1}+({1\over p}-1)(x^p+y^p)^{1/p}
\over(2-x^p-y^p)^2}(1-y^p)^2
\end{bmatrix}.
\end{align*}
(In the above computation we have used the assumption that $x^p+y^p\ne2$.) Therefore, it
follows that
\begin{equation}\label{F-3.1}
\biggl({A^p+B_\theta^p\over2}\biggr)^{1/p}
=\begin{bmatrix}1+\alpha_p^{(1,1)}\theta^2&\alpha_p^{(1,2)}\theta\\
\alpha_p^{(1,2)}\theta&\bigl({x^p+y^p\over2}\bigr)^{1/p}+\alpha_p^{(2,2)}\theta^2
\end{bmatrix}+o(\theta^2),
\end{equation}
where
\begin{align*}
\alpha_p^{(1,1)}&:=-{1\over2p}(1-y^p)
+{(2-2p)-(x^p+y^p)+2p2^{-1/p}(x^p+y^p)^{1/p}\over2p(2-x^p-y^p)^2}(1-y^p)^2 \\
&\ =-{1\over2p}(1-y^p)+{(1-y^p)^2\over2p(2-x^p-y^p)}
-{2^{-1/p}(1-y^p)^2\bigl\{2^{1/p}-(x^p+y^p)^{1/p}\bigr\}\over(2-x^p-y^p)^2} \\
&\ =-{(1-x^p)(1-y^p)\over2p(2-x^p-y^p)}
-{(1-y^p)^2\bigl\{1-({x^p+y^p\over2})^{1/p}\bigr\}\over(2-x^p-y^p)^2}, \\
\alpha_p^{(1,2)}&:={(1-y^p)\bigl\{1-({x^p+y^p\over2})^{1/p}\bigr\}\over(2-x^p-y^p)}.
\end{align*}
(The form of $\alpha_p^{(2,2)}$ is not written down here since it is unnecessary in the
computation below.) By assumption $((x^p+y^p)/2)^{1/p}\ne((x^q+y^q)/2)^{1/q}$,
we arrive at
\begin{align*}
&\det\biggl\{\biggl({A^q+B_\theta^q\over2}\biggr)^{1/q}
-\biggl({A^p+B_\theta^p\over2}\biggr)^{1/p}\biggr\} \\
&=\theta^2\Biggl[\bigl\{\alpha_q^{(1,1)}-\alpha_p^{(1,1)}\bigr\}
\biggl\{\biggl({x^q+y^q\over2}\biggr)^{1/q}-\biggl({x^p+y^p\over2}\biggr)^{1/p}\biggr\}
-\bigl\{\alpha_q^{(1,2)}-\alpha_p^{(1,2)}\bigr\}^2\Biggr]+o(\theta^2).
\end{align*}
The above formula inside the big bracket is equal to the sum of the following $\Delta_1$
and $\Delta_2$:
\begin{align*}
\Delta_1&:={1\over2}
\biggl\{{(1-x^p)(1-y^p)\over p(2-x^p-y^p)}-{(1-x^q)(1-y^q)\over q(2-x^q-y^q)}\biggr\}
\biggl\{\biggl({x^q+y^q\over2}\biggr)^{1/q}-\biggl({x^p+y^p\over2}\biggr)^{1/p}\biggr\}, \\
\Delta_2&:=\Biggl\{-{(1-y^q)^2\bigl(1-({x^q+y^q\over2})^{1/q}\bigr)\over(2-x^q-y^q)^2}
+{(1-y^p)^2\bigl(1-({x^p+y^p\over2})^{1/p}\bigr)\over(2-x^p-y^p)^2}\Biggr\} \\
&\qquad\qquad\times
\biggl\{\biggl({x^q+y^q\over2}\biggr)^{1/q}-\biggl({x^p+y^p\over2}\biggr)^{1/p}\biggr\} \\
&\qquad-\Biggl\{{(1-y^q)\bigl(1-({x^q+y^q\over2})^{1/q}\bigr)\over(2-x^q-y^q)}
-{(1-y^p)\bigl(1-({x^p+y^p\over2})^{1/p}\bigr)\over(2-x^p-y^p)}\Biggr\}^2.
\end{align*}
Letting $w_p:=1-((x^p+y^p)/2)^{1/p}$ we furthermore compute
\begin{align*}
\Delta_2&=\Biggl\{-{(1-y^q)^2w_q\over(2-x^q-y^q)^2}
+{(1-y^p)^2w_p\over(2-x^p-y^p)^2}\Biggr\}(-w_q+w_p) \\
&\qquad-\Biggl\{{(1-y^q)w_q\over(2-x^q-y^q)}-{(1-y^p)w_p\over(2-x^p-y^p)}\Biggr\}^2 \\
&=\biggl\{{1-y^p\over2-x^p-y^p}-{1-y^q\over2-x^q-y^q}\biggr\}^2w_pw_q,
\end{align*}
and the lemma follows from the above expressions of $\Delta_1$ and $\Delta_2$.
\end{proof}

Now, let $-1<p<1/2$, $p\ne0$ and $q>\max\{0,p\}$. We prove that
$$
\biggl({A^p+B_\theta^p\over2}\biggr)^{1/p}
\not\le\biggl({A^q+B_\theta^q\over2}\biggr)^{1/q}
$$
for some $x,y>0$ and some $\theta>0$. Suppose on the contrary that
$$
\biggl({A^p+B_\theta^p\over2}\biggr)^{1/p}
\le\biggl({A^q+B_\theta^q\over2}\biggr)^{1/q}
$$
for all $x,y>0$ and all $\theta>0$. Let $0<x<1$ and $y=x^2$. Then it is clear that
$$
x^p+x^{2p}\ne2,\qquad x^q+x^{2q}<2,
$$
$$
\biggl({x^p+x^{2p}\over2}\biggr)^{1/p}=x\biggl({1+x^p\over2}\biggr)^{1/p}
<x\biggl({1+x^q\over2}\biggr)^{1/q}=\biggl({x^q+x^{2q}\over2}\biggr)^{1/q}.
$$
Hence, by Lemma \ref{L-3.1} we must
have
\begin{align}
&{1\over2}\biggl\{{(1-x^p)(1-x^{2p})\over p(2-x^p-x^{2p})}-{(1-x^q)(1-x^{2q})\over
q(2-x^q-x^{2q})}\biggr\}\biggl\{
\biggl({x^q+x^{2q}\over2}\biggr)^{1/q}-\biggl({x^p+x^{2p}\over2}\biggr)^{1/p}\biggr\}
\nonumber\\
&\quad-\biggl\{{1-x^{2p}\over2-x^p-x^{2p}}-{1-x^{2q}\over2-x^q-x^{2q}}\biggr\}^2
\biggl\{1-\biggl({x^p+x^{2p}\over2}\biggr)^{1/p}\biggr\}
\biggl\{1-\biggl({x^q+x^{2q}\over2}\biggr)^{1/q}\biggr\} \nonumber\\
&\ge0. \label{F-3.2}
\end{align}
When $0<p<1/2$ and $q>p$, we have as $x\searrow0$
\begin{equation}\label{F-3.3}
\biggl({x^q+x^{2q}\over2}\biggr)^{1/q}-\biggl({x^p+x^{2p}\over2}\biggr)^{1/p}
={x(1+x^q)^{1/q}\over2^{1/q}}-{x(1+x^p)^{1/p}\over2^{1/p}}
\approx{2^{1/p}-2^{1/q}\over2^{{1\over p}+{1\over q}}}\,x
\end{equation}
and
$$
{1-x^{2p}\over2-x^p-x^{2p}}-{1-x^{2q}\over2-x^q-x^{2q}}
={x^p-x^q-x^{2p}+x^{2q}-x^{p+2q}+x^{2p+q}\over(2-x^p-x^{2p})(2-x^q-x^{2q})}
\approx{x^p\over4}.
$$
Therefore, the dominant term of the left-hand side of \eqref{F-3.2} is
$$
{2^{1/p}-2^{1/q}\over2^{1+{1\over p}+{1\over q}}}
\biggl({1\over2p}-{1\over 2q}\biggr)x-{x^{2p}\over16}<0
$$
thanks to $2p<1$ when $x>0$ is sufficiently small. This contradicts \eqref{F-3.2}.

When $-1<p<0$ and $q>0$, we have the same estimation \eqref{F-3.3}, and moreover
$$
{(1-x^p)(1-x^{2p})\over p(2-x^p-x^{2p})}-{(1-x^q)(1-x^{2q})\over q(2-x^q-x^{2q})}
\approx-{x^p\over p}
$$
and
$$
{1-x^{2p}\over2-x^p-x^{2p}}-{1-x^{2q}\over2-x^q-x^{2q}}
\approx1-{1\over2}={1\over2}\quad\mbox{as $x\searrow0$}.
$$
Therefore, the left-hand side of \eqref{F-3.2} is dominantly
$$
{2^{1/p}-2^{1/q}\over2^{1+{1\over p}+{1\over q}}}
\biggl(-{x^{p+1}\over p}\biggr)-{1\over4}<0
$$
thanks to $p+1>0$ for $x>0$ sufficiently small, and we have a contradiction again.

\subsection{Case $p=0<q$}

For $x,y>0$ let $A,B_\theta\in\bP_2$ be the same as in Section 3.1. The following is the
counterpart of Lemma \ref{L-3.1} in the case $p=0$. The expression here can easily be
obtained by taking the limit of that in Lemma \ref{L-3.1} as $p\to0$. However, deriving
the expression in this way is not a rigorous proof, so we sketch an independent proof.

\begin{lemma}\label{L-3.2}
Let $q\in\bR\setminus\{0\}$ and $x,y>0$ be such that $xy\ne1$, $x^q+y^q\ne2$ and $x\ne y$
(hence $\sqrt{xy}\ne((x^q+y^q)/2)^{1/q}$). Then we have
\begin{align*}
&\det\biggl\{\biggl({A^q+B_\theta^q\over2}\biggr)^{1/q}
-\exp\biggl({\log A+\log B_\theta\over2}\biggr)\biggr\} \\
&\qquad=\theta^2\Biggl[-{1\over2}
\biggl\{{\log x\cdot\log y\over\log xy}+{(1-x^q)(1-y^q)\over q(2-x^q-y^q)}\biggr\}
\biggl\{\biggl({x^q+y^q\over2}\biggr)^{1/q}-\sqrt{xy}\biggr\} \\
&\qquad\qquad\quad-\biggl\{{\log y\over\log xy}-{1-y^q\over2-x^q-y^q}\biggr\}^2
\bigl(1-\sqrt{xy}\bigr)
\biggl\{1-\biggl({x^q+y^q\over2}\biggr)^{1/q}\biggr\}\Biggr] \\
&\qquad\qquad+o(\theta^2)\quad\mbox{as $\theta\to0$}.
\end{align*}
\end{lemma}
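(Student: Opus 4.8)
The plan is to imitate the proof of Lemma~\ref{L-3.1} step by step, replacing the matrix $A^p+B_\theta^p$ and the function $t\mapsto t^{1/p}$ by the matrix $\log A+\log B_\theta$ and the function $t\mapsto e^{t/2}$; the term $((A^q+B_\theta^q)/2)^{1/q}$ is expanded exactly as in \eqref{F-3.1} (with $q$ in place of $p$), so the only new ingredient is the Taylor expansion in $\theta$ of the Log-Euclidean term $\exp((\log A+\log B_\theta)/2)$. As the authors remark, one cannot simply let $p\to0$ in Lemma~\ref{L-3.1}, since that would require interchanging the limits $p\to0$ and $\theta\to0$; instead I would carry out the expansion directly, which turns out to be no harder than the case $p\ne0$.

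First I would expand $\log A+\log B_\theta$ in powers of $\theta$. Since $\log A=\diag(0,\log x)$ and $\log B_\theta=R_\theta\diag(0,\log y)R_\theta^{-1}$ with $R_\theta=\begin{bmatrix}\cos\theta&-\sin\theta\\\sin\theta&\cos\theta\end{bmatrix}$, the same computation as for $A^p+B_\theta^p$ yields $\log A+\log B_\theta=\widetilde G+\theta\widetilde H+\theta^2\widetilde K+o(\theta^2)$ with
\[
\widetilde G=\begin{bmatrix}0&0\\0&\log xy\end{bmatrix},\qquad
\widetilde H=\begin{bmatrix}0&-\log y\\-\log y&0\end{bmatrix},\qquad
\widetilde K=\begin{bmatrix}\log y&0\\0&-\log y\end{bmatrix}.
\]
Then I would apply the Taylor formula with Fr\'echet derivatives (\cite[Theorem 2.3.1]{Hi1}) to $X\mapsto e^{X/2}$ at $\widetilde G$, computing the first and second Fr\'echet derivatives by the Daleckii--Krein formula. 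This requires the first and second divided differences of $t\mapsto e^{t/2}$ at the two eigenvalues $0$ and $\tfrac12\log xy$ of $\widetilde G/2$; the hypothesis $xy\ne1$ is precisely what makes these divided differences well defined (playing the role of $x^p+y^p\ne2$ in Lemma~\ref{L-3.1}). Collecting terms should give
\[
\exp\biggl({\log A+\log B_\theta\over2}\biggr)
=\begin{bmatrix}1+\widetilde\alpha^{(1,1)}\theta^2&\widetilde\alpha^{(1,2)}\theta\\
\widetilde\alpha^{(1,2)}\theta&\sqrt{xy}+\widetilde\alpha^{(2,2)}\theta^2\end{bmatrix}+o(\theta^2),
\]
where $\widetilde\alpha^{(1,1)}={\log x\log y\over2\log xy}-{(\log y)^2(1-\sqrt{xy})\over(\log xy)^2}$ and $\widetilde\alpha^{(1,2)}={(\log y)(1-\sqrt{xy})\over\log xy}$; these are exactly the $p\to0$ limits of $\alpha_p^{(1,1)}$ and $\alpha_p^{(1,2)}$, which serves as a useful check but is not part of the argument.

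Finally I would subtract this from $((A^q+B_\theta^q)/2)^{1/q}$, given by \eqref{F-3.1}, and compute the determinant of the difference. The $\theta^0$ and $\theta^1$ parts vanish because the $(1,1)$-entry of the difference is $O(\theta^2)$ while the off-diagonal entry is $O(\theta)$, and the $\theta^2$-coefficient equals $\bigl(\alpha_q^{(1,1)}-\widetilde\alpha^{(1,1)}\bigr)\bigl(((x^q+y^q)/2)^{1/q}-\sqrt{xy}\bigr)-\bigl(\alpha_q^{(1,2)}-\widetilde\alpha^{(1,2)}\bigr)^2$. Writing $w_0:=1-\sqrt{xy}$ and $w_q:=1-((x^q+y^q)/2)^{1/q}$, both nonzero since $x\ne y$ forces $\sqrt{xy}\ne((x^q+y^q)/2)^{1/q}$ and $x^q+y^q\ne2$, I would then rearrange this exactly as in the $\Delta_1,\Delta_2$ step of Lemma~\ref{L-3.1}: the $\Delta_1$-type part gives $-\tfrac12\{\log x\log y/\log xy+(1-x^q)(1-y^q)/(q(2-x^q-y^q))\}\{((x^q+y^q)/2)^{1/q}-\sqrt{xy}\}$, while the remaining terms, via the identity $(\widetilde a^2w_0-b^2w_q)(w_0-w_q)-(\widetilde aw_0-bw_q)^2=-(\widetilde a-b)^2w_0w_q$ with $\widetilde a=\log y/\log xy$ and $b=(1-y^q)/(2-x^q-y^q)$, give $-\{\log y/\log xy-(1-y^q)/(2-x^q-y^q)\}^2(1-\sqrt{xy})\{1-((x^q+y^q)/2)^{1/q}\}$. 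Summing the two pieces reproduces the claimed expression.

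The one genuinely new computation is the second-order Taylor expansion of $e^{X/2}$ in the middle step --- concretely, getting the first and second divided differences of $e^{t/2}$ at $0$ and $\tfrac12\log xy$ right, and keeping the $\theta$-bookkeeping straight. Everything after that is the same algebra as in Lemma~\ref{L-3.1}, so I expect no real difficulty there, and in particular nothing beyond the Daleckii--Krein formula and elementary manipulations is needed.
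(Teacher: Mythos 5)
Your proposal follows the paper's proof essentially verbatim: the paper expands $\log A+\log B_\theta=G+\theta H+\theta^2K+o(\theta^2)$ with exactly your $\widetilde G,\widetilde H,\widetilde K$, applies the Taylor formula with Fr\'echet derivatives and the Daleckii--Krein formula to $e^{x}$ at $G/2$, arrives at the same coefficients $\alpha_0^{(1,1)}$ and $\alpha_0^{(1,2)}$, and finishes with the same $\Delta_1,\Delta_2$ rearrangement using $w_0=1-\sqrt{xy}$ and $w_q$. The approach and all intermediate quantities match, so the proposal is correct and essentially identical to the paper's argument.
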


\begin{proof}
We have
\begin{align*}
\log A+\log B_\theta
&=\begin{bmatrix}\log y\cdot\sin^2\theta&-\log y\cdot{\sin2\theta\over2}\\
-\log y\cdot{\sin2\theta\over2}&\log xy-\log y\cdot\sin^2\theta\end{bmatrix} \\
&=G+\theta H+\theta^2K+o(\theta^2),
\end{align*}
where
$$
G:=\begin{bmatrix}0&0\\0&\log xy\end{bmatrix},\quad
H:=\begin{bmatrix}0&-\log y\\-\log y&0\end{bmatrix},\quad
K:=\begin{bmatrix}\log y&0\\0&-\log y\end{bmatrix}.
$$
As in the proof of Lemma \ref{L-3.1},
\begin{align*}
&\exp\biggl({\log A+\log B_\theta\over2}\biggr) \\
&\quad=e^{G/2}+D(e^x)(G/2)\biggl(\theta{H\over2}+\theta^2{K\over2}\biggr)
+{1\over2}D^2(e^x)(G/2)\biggl(\theta{H\over2},\theta{H\over2}\biggr)+o(\theta^2),
\end{align*}
$$
D(e^x)(G/2)\biggl(\theta{H\over2}+\theta^2{K\over2}\biggr)
=\theta\begin{bmatrix}0&{(1-\sqrt{xy})\log y\over\log xy}\\
{(1-\sqrt{xy})\log y\over\log xy}&0\end{bmatrix}
+\theta^2\begin{bmatrix}{\log y\over2}&0\\0&-{\sqrt{xy}\log y\over2}
\end{bmatrix},
$$
$$
{1\over2}D^2(e^x)(G/2)\biggl(\theta{H\over2},\theta{H\over2}\biggr)
=\theta^2\begin{bmatrix}
-{\log^2y\over2\log xy}-{(1-\sqrt{xy})\log^2y\over\log^2xy}&0\\
0&{\sqrt{xy}\log^2y\over2\log xy}+{(1-\sqrt{xy})\log^2y\over\log^2xy}
\end{bmatrix},
$$
where we have used assumption $xy\ne1$. Therefore, we write
\begin{equation}\label{F-3.4}
\exp\biggl({\log A+\log B_\theta\over2}\biggr)
=\begin{bmatrix}1+\alpha_0^{(1,1)}\theta^2&\alpha_0^{(1,2)}\theta\\
\alpha_0^{(1,2)}\theta&\sqrt{xy}+\alpha_0^{(2,2)}\theta^2\end{bmatrix}+o(\theta^2),
\end{equation}
where
\begin{align*}
\alpha_0^{(1,1)}
&:={\log y\over2}-{\log^2y\over2\log xy}-{(1-\sqrt{xy})\log^2y\over\log^2xy}
={\log x\cdot\log y\over2\log xy}-{(1-\sqrt{xy})\log^2y\over\log^2xy}, \\
\alpha_0^{(1,2)}
&:={(1-\sqrt{xy})\log y\over\log xy}.
\end{align*}
Since $\sqrt{xy}\ne((x^q+y^q)/2)^{1/q}$ by assumption, we obtain, by \eqref{F-3.4} and
\eqref{F-3.1} with $q$,
\begin{align*}
&\det\biggl\{\biggl({A^q+B_\theta^q\over2}\biggr)^{1/q}
-\exp\biggl({\log A+\log B_\theta\over2}\biggr)\biggr\} \\
&\qquad=\theta^2\Biggl[\bigl\{\alpha_q^{(1,1)}-\alpha_0^{(1,1)}\bigr\}
\biggl\{\biggl({x^q+y^q\over2}\biggr)^{1/q}-\sqrt{xy}\biggr\}
-\bigl\{\alpha_q^{(1,2)}-\alpha_0^{(1,2)}\bigr\}^2\biggr]+o(\theta^2).
\end{align*}
Letting $w_0:=1-\sqrt{xy}$ as well as $w_q:=1-((x^q+y^q)/2)^{1/q}$ we compute
the expression in the above big bracket as
\begin{align*}
&\biggl\{-{(1-x^q)(1-y^q)\over2q(2-x^q-y^q)}-{\log x\cdot\log y\over2\log xy}\biggr\}
\biggl\{\biggl({x^q+y^q\over2}\biggr)^{1/q}-\sqrt{xy}\biggr\} \\
&\quad+\biggl\{-{(1-y^q)w_q\over2-x^q-y^q}+{w_0\log^2y\over\log^2xy}\biggr\}(-w_q+w_0)
-\biggl\{{(1-y^q)^2w_q\over(2-x^q-y^q)^2}-{w_0\log y\over\log xy}\biggr\}^2 \\
&=-{1\over2}\biggl\{{(1-x^q)(1-y^q)\over2q(2-x^q-y^q)}+{\log x\cdot\log y\over2\log xy}
\biggr\}\biggl\{\biggl({x^q+y^q\over2}\biggr)^{1/q}-\sqrt{xy}\biggr\} \\
&\qquad-\biggl\{{\log y\over\log xy}-{1-y^q\over2-x^q-y^q}\biggr\}^2w_0w_q,
\end{align*}
and the assertion follows.
\end{proof}

Now, let $q>0$. We suppose that
$$
\exp\biggl({\log A+\log B_\theta\over2}\biggr)
\le\biggl({A^q+B_\theta^q\over2}\biggr)^{1/q}
$$
for all $x,y>0$ and all $\theta>0$. Let $0<x<1$ and $y=x^2$, so $x^q+y^q\ne2$ and $x\ne y$.
Hence, by Lemma \ref{L-3.2} we must have
\begin{align}
&-{1\over2}\biggl\{{2\over3}\log x+{(1-x^q)(1-x^{2q})\over q(2-x^q-x^{2q})}\biggr\}
\biggl\{\biggl({x^q+x^{2q}\over2}\biggr)^{1/q}-x^{3/2}\biggr\} \nonumber\\
&\qquad-\biggl\{{2\over3}-{1-x^{2q}\over2-x^q-x^{2q}}\biggr\}^2(1-x^{3/2})
\biggl\{1-\biggl({x^q+x^{2q}\over2}\biggr)^{1/q}\biggr\}\ge0. \label{F-3.5}
\end{align}
As $x\searrow0$ we have
$$
\biggl({x^q+x^{2q}\over2}\biggr)^{1/q}-x^{3/2}\approx{1\over2^{1/q}}\,x
$$
so that the left-hand side of \eqref{F-3.5} is dominantly
$$
-{1\over3\cdot2^{1/q}}\,x\log x-\biggl({2\over3}-{1\over2}\biggr)^2
=-{1\over3\cdot2^{1/q}}\,x\log x-{1\over36}<0,
$$
a contradiction. Hence it has been shown that, for every $q>0$,
$$
\exp\biggl({\log A+\log B_\theta\over2}\biggr)
\not\le\biggl({A^q+B_\theta^q\over2}\biggr)^{1/q}
$$
for some $x,y>0$ and some $\theta>0$.

\subsection{Case $0<p<q<1$}

For $\theta\in\bR$ define $2\times2$ positive semidefinite matrices
$$
A:=\begin{bmatrix}2&0\\0&0\end{bmatrix},\qquad
B_\theta:=\begin{bmatrix}\cos^2\theta&\cos\theta\sin\theta\\
\cos\theta\sin\theta&\sin^2\theta\end{bmatrix}.
$$
Indeed, the latter is $B_\theta$ in Section 3.1 with $y=0$ while the former is slightly
different from $A$ in Section 3.1 with $x=0$.

\begin{lemma}\label{L-3.3}
For every $p,q\in(0,1)$,
\begin{align*}
&\det\biggl\{\biggl({A^q+B_\theta^q\over2}\biggr)^{1/q}
-\biggl({A^p+B_\theta^p\over2}\biggr)^{1/p}\biggr\} \\
&\quad=-\theta^2\biggl({2^p+1\over2}\biggr)^{1/p}\biggl({2^q+1\over2}\biggr)^{1/q}
\biggl({1\over2^p+1}-{1\over2^q+1}\biggr)^2+o(\theta^2)\quad\mbox{as $\theta\to0$}.
\end{align*}
\end{lemma}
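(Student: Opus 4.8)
The plan is to exploit a structural feature absent from Lemmas~\ref{L-3.1} and \ref{L-3.2}: here $B_\theta$ is the rank-one orthogonal projection onto the line spanned by $(\cos\theta,\sin\theta)$, so $B_\theta^r=B_\theta$ for every $r>0$, while $A^r=\diag(2^r,0)$. Hence for $r\in\{p,q\}$ the matrix
$$
M_r(\theta):=A^r+B_\theta^r=\begin{bmatrix}2^r+\cos^2\theta&\cos\theta\sin\theta\\
\cos\theta\sin\theta&\sin^2\theta\end{bmatrix}
$$
has trace $2^r+1$ and determinant $2^r\sin^2\theta$, so its eigenvalues are the two roots of $t^2-(2^r+1)t+2^r\sin^2\theta=0$. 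First I would expand these roots as $\theta\to0$:
$$
\mu_+^{(r)}=(2^r+1)-{2^r\over2^r+1}\,\theta^2+O(\theta^4),\qquad
\mu_-^{(r)}={2^r\over2^r+1}\,\theta^2+O(\theta^4).
$$

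The decisive observation is that $\mu_-^{(r)}=O(\theta^2)$, so $(\mu_-^{(r)}/2)^{1/r}=O(\theta^{2/r})=o(\theta^2)$ \emph{precisely because} $0<r<1$. Therefore the contribution of the small eigenvalue to $(M_r(\theta)/2)^{1/r}$ is $o(\theta^2)$, and it is enough to expand the scalar $(\mu_+^{(r)}/2)^{1/r}$ (an ordinary Taylor expansion) together with the spectral projection $P_+^{(r)}$ onto the top eigenvector. The eigenvector for $\mu_+^{(r)}$ is proportional to $(2^r+1,\,\theta)$ up to $O(\theta^3)$, which yields
$$
\biggl({M_r(\theta)\over2}\biggr)^{1/r}
=\begin{bmatrix}c_r+O(\theta^2)&{c_r\over2^r+1}\,\theta+o(\theta^2)\\
{c_r\over2^r+1}\,\theta+o(\theta^2)&{c_r\over(2^r+1)^2}\,\theta^2+o(\theta^2)\end{bmatrix},
\qquad c_r:=\biggl({2^r+1\over2}\biggr)^{1/r}.
$$
Equivalently, one could use the representation $M^{1/r}=\alpha M+\beta I$ for $2\times2$ matrices, as in the proof of Theorem~\ref{T-2.5}, expanding $\alpha,\beta$ in $\theta$; the $o(\theta^2)$ smallness of $(\mu_-^{(r)}/2)^{1/r}$ makes both routes amount to the same thing.

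Finally I would subtract the two expansions and compute the $2\times2$ determinant. The difference has $(1,1)$ entry $c_q-c_p+O(\theta^2)$, off-diagonal entry $\bigl({c_q\over2^q+1}-{c_p\over2^p+1}\bigr)\theta+o(\theta^2)$, and $(2,2)$ entry $\bigl({c_q\over(2^q+1)^2}-{c_p\over(2^p+1)^2}\bigr)\theta^2+o(\theta^2)$, so its determinant equals
$$
\theta^2\Biggl[(c_q-c_p)\biggl({c_q\over(2^q+1)^2}-{c_p\over(2^p+1)^2}\biggr)
-\biggl({c_q\over2^q+1}-{c_p\over2^p+1}\biggr)^2\Biggr]+o(\theta^2).
$$
Setting $u=1/(2^p+1)$ and $v=1/(2^q+1)$, the bracket is $(c_q-c_p)(c_qv^2-c_pu^2)-(c_qv-c_pu)^2=-c_pc_q(u-v)^2$, which is exactly the claimed formula since $c_pc_q=((2^p+1)/2)^{1/p}((2^q+1)/2)^{1/q}$. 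The one point I expect to need care is already visible: $M_r(0)$ is singular, so the Fr\'echet--Taylor calculus used in Lemmas~\ref{L-3.1}--\ref{L-3.2} is unavailable here (the second derivative of $t^{1/r}$ at $0$ is infinite when $r>1/2$); it is exactly the projection structure of $B_\theta$ that saves the day, by pushing the singular eigenvalue's contribution below order $\theta^2$.
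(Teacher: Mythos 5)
Your proposal is correct and follows essentially the same route as the paper's proof: a direct spectral computation of the $2\times2$ matrix $(A^r+B_\theta^r)/2$, with the key point in both arguments being that the small eigenvalue is $O(\theta^2)$ so its $1/r$-th power is $o(\theta^2)$ precisely because $r\in(0,1)$, after which the entrywise expansions and the final determinant identity $(c_q-c_p)(c_qv^2-c_pu^2)-(c_qv-c_pu)^2=-c_pc_q(u-v)^2$ coincide with the paper's. Your framing via the projection property $B_\theta^r=B_\theta$ and the trace/determinant of $M_r(\theta)$ is only a cosmetic repackaging of the paper's $a,b,c$ parametrization.
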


\begin{proof}
Since $(A^p+B_\theta^p)/2$ is singular at $\theta=0$ and $x^{1/p}$ is singular at $x=0$,
the Taylor formula applied in Sections 3.1 and 3.2 cannot be used. However, a direct
approximate computation is not difficult as below. Since $B$ is a rank one projection, we
write
$$
{A^p+B_\theta^p\over2}=\begin{bmatrix}
{2^p+1-\sin^2\theta\over2}&{\sin2\theta\over4}\\
{\sin2\theta\over4}&{\sin^2\theta\over2}\end{bmatrix}
={2^p+1\over4}\begin{bmatrix}1+a&b\\b&1-a
\end{bmatrix},
$$
where
$$
a:=1-{2\sin^2\theta\over2^p+1},\qquad b:={\sin2\theta\over2^p+1}.
$$
Observe that $\begin{bmatrix}1+a&b\\b&1-a\end{bmatrix}$ has the eigenvalues $1+c$ and
$1-c$ with $c:=\sqrt{a^2+b^2}$ ($<1$) and the eigenvectors are
$\begin{bmatrix}c+a\\b\end{bmatrix}$ and $\begin{bmatrix}c-a\\-b\end{bmatrix}$,
respectively, from which one can compute
\begin{align*}
&\biggl({A^p+B_\theta^p\over2}\biggr)^{1/p} \\
&=\biggl({2^p+1\over4}\biggr)^{1/p}\begin{bmatrix}c+a&c-a\\b&-b\end{bmatrix}
\begin{bmatrix}(1+c)^{1/p}&0\\0&(1-c)^{1/p}\end{bmatrix}
\begin{bmatrix}c+a&c-a\\b&-b\end{bmatrix}^{-1} \\
&=\biggl({2^p+1\over4}\biggr)^{1/p}\begin{bmatrix}
{(1+c)^{1/p}+(1-c)^{1/p}\over2}+{(1+c)^{1/p}-(1-c)^{1/p}\over2c}a&
{(1+c)^{1/p}-(1-c)^{1/p}\over2c}b\\{(1+c)^{1/p}-(1-c)^{1/p}\over2c}b&
{(1+c)^{1/p}+(1-c)^{1/p}\over2}-{(1+c)^{1/p}-(1-c)^{1/p}\over2c}a
\end{bmatrix}.
\end{align*}
As $\theta\searrow0$ we compute
$$
a=1-{2\theta^2\over2^p+1}+o(\theta^2),\qquad
b={2\theta\over2^p+1}+o(\theta),
$$
\begin{align*}
c^2=a^2+b^2&=1-{2^{p+2}\theta^2\over(2^p+1)^2}+o(\theta^2)
\end{align*}
so that
$$
c=1-{2^{p+1}\theta^2\over(2^p+1)^2}+o(\theta^2),\qquad
{1\over c}=1+{2^{p+1}\theta^2\over(2^p+1)^2}+o(\theta^2)
$$
and
$$
(1+c)^{1/p}=2^{1/p}\biggl(1-{2^p\theta^2\over p(2^p+1)^2}\biggr)+o(\theta^2),\qquad
(1-c)^{1/p}=o(\theta^2)
$$
thanks to $p\in(0,1)$. Therefore, the $(1,1)$ entry of $((A^p+B_\theta^p)/2)^{1/p}$ is
\begin{align*}
\alpha_p^{(1,1)}&=\biggl({2^p+1\over4}\biggr)^{1/p}
\biggl\{2^{{1\over p}-1}\biggl(1-{2^p\theta^2\over p(2^p+1)^2}\biggr) \\
&\qquad\qquad+2^{{1\over p}-1}\biggl(1-{2^p\theta^2\over p(2^p+1)^2}\biggr)
\biggl(1+{2^{p+1}\theta^2\over(2^p+1)^2}\biggr)
\biggl(1-{2\theta^2\over2^p+1}\biggr)\biggr\}+o(\theta^2) \\
&={(2^p+1)^{1/p}\over2^{{1\over p}+1}}
\biggl\{2-{2^{p+1}\theta^2\over p(2^p+1)^2}+{2^{p+1}\theta^2\over(2^p+1)^2}
-{2\theta^2\over2^p+1}\biggr\}+o(\theta^2) \\
&={(2^p+1)^{1/p}\over2^{1/p}}
\biggl(1-{2^p+p\over p(2^p+1)^2}\,\theta^2\biggr)+o(\theta^2).
\end{align*}
The $(2,2)$-entry of $((A^p+B_\theta^p)/2)^{1/p}$ is
\begin{align*}
\alpha_p^{(2,2)}&=\biggl({2^p+1\over4}\biggr)^{1/p}
\biggl\{2^{{1\over p}-1}\biggl(1-{2^p\theta^2\over p(2^p+1)^2}\biggr) \\
&\qquad\qquad-2^{{1\over p}-1}\biggl(1+{2^{p+1}\theta^2\over(2^p+1)^2}\biggr)
\biggl(1-{2^p\theta^2\over p(2^p+1)^2}\biggr)
\biggl(1-{2\theta^2\over2^p+1}\biggr)\biggr\}+o(\theta^2) \\
&={(2^p+1)^{1/p}\over2^{{1\over p}+1}}\biggl\{-{2^{p+1}\theta^2\over(2^p+1)^2}
+{2\theta^2\over2^p+1}\biggr\}+o(\theta^2) \\
&={(2^p+1)^{{1\over p}-2}\over2^{1/p}}\,\theta^2+o(\theta^2).
\end{align*}
The $(1,2)$-entry of $((A^p+B_\theta^p)/2)^{1/p}$ is
\begin{align*}
\alpha_p^{(1,2)}&=\biggl({2^p+1\over4}\biggr)^{1/p}
2^{{1\over p}-1}\biggl(1+{2^{p+1}\theta^2\over(2^p+1)^2}\biggr)
\biggl(1-{2^p\theta^2\over p(2^p+1)^2}\biggr){2\theta\over2^p+1}+o(\theta^2) \\
&={(2^p+1)^{{1\over p}-1}\over2^{1/p}}\,\theta+o(\theta^2).
\end{align*}
By the above estimate for $((A^p+B_\theta^p)/2)^{1/p}$ and the same for
$((A^q+B_\theta^q)/2)^{1/q}$ we obtain
\begin{align*}
&\det\biggl\{\biggl({A^q+B_\theta^q\over2}\biggr)^{1/q}
-\biggl({A^p+B_\theta^p\over2}\biggr)^{1/p}\biggr\} \\
&\qquad=\bigl\{\alpha_q^{(1,1)}-\alpha_p^{(1,1)}\bigr\}
\bigl\{\alpha_q^{(2,2)}-\alpha_p^{(2,2)}\bigr\}
-\bigl\{\alpha_q^{(1,2)}-\alpha_p^{(1,2)}\bigr\}^2 \\
&\qquad=\biggl\{{(2^q+1)^{1/q}\over2^{1/q}}-{(2^p+1)^{1/p}\over2^{1/p}}\biggr\}
\biggl\{{(2^q+1)^{{1\over q}-2}\over2^{1/q}}
-{(2^p+1)^{{1\over p}-2}\over2^{1/p}}\biggr\}\theta^2 \\
&\qquad\qquad-\biggl\{{(2^q+1)^{{1\over q}-1}\over2^{1/q}}
-{(2^p+1)^{{1\over p}-1}\over2^{1/p}}\biggr\}^2\theta^2+o(\theta^2) \\
&\qquad=\biggl\{-{(2^p+1)^{{1\over p}-2}(2^q+1)^{1/q}\over2^{{1\over p}+{1\over q}}}
-{(2^p+1)^{1/p}(2^q+1)^{{1\over q}-1}\over2^{{1\over p}+{1\over q}}} \\
&\qquad\qquad\qquad+{2(2^p+1)^{{1\over p}-1}(2^q+1)^{{1\over q}-1}
\over2^{{1\over p}+{1\over q}}}\biggr\}\theta^2+o(\theta^2) \\
&\qquad=-\biggl({2^p+1\over2}\biggr)^{1/p}\biggl({2^q+1\over2}\biggr)^{1/q}
\biggl({1\over2^p+1}-{1\over2^q+1}\biggr)^2\theta^2+o(\theta^2)
\quad\mbox{as $\theta\to0$}.
\end{align*}
\end{proof}

Now, let $0<p<q<1$. Suppose that $((X^p+Y^p)/2)^{1/p}\le((X^q+Y^q)/2)^{1/q}$ for all
$X,Y\in\bP_2$. By continuity this holds for all $2\times2$ positive semidefinite $X,Y$ too
so that
$$
\biggl({A^p+B_\theta^p\over2}\biggr)^{1/p}
\le\biggl({A^q+B_\theta^q\over2}\biggr)^{1/q}
$$
holds for any $\theta>0$. Then, by Lemma \ref{L-3.3} we must have
$$
-\biggl({1\over2^p+1}-{1\over2^q+1}\biggr)^2\ge0,
$$
which implies that $p=q$, a contradiction.

\subsection{Proof of Theorem \ref{T-2.4}}
To prove Theorem \ref{T-2.4}, we may, in the same way as above for Theorem \ref{T-2.3},
provide counterexamples for the three cases of Sections 3.1--3.3. This can easily be done
by using the same examples as above.

{\it Case 3.1.}\enspace
Define a unital CP map (i.e., completely positive linear map) $\Phi:\bM_3\to\bM_2$ by
$$
\Phi(Z):={Z[1,2]+U_\theta Z[1,3]U_\theta^*\over2}
$$
for $Z\in\bM_3$, where $Z[i,j]$ denotes the principal submatrix of $Z$ on rows and columns
$i$ and $j$, and
$$
U_\theta:=\begin{bmatrix}\cos\theta&-\sin\theta\\\sin\theta&\cos\theta\end{bmatrix}.
$$
For a diagonal matrix $Z:=\diag(1,x,y)$, since
$$
\Phi(Z^p)^{1/p}=\biggl({A^p+B_\theta^p\over2}\biggr)^{1/p}
$$
with $A$ and $B_\theta$ in Section 3.1, we have a counterexample for this case in the same
way as in Section 3.1.

{\it Case 3.2.}\enspace
By the same $\Phi$ and $Z$ as in Case 3.1 we have a counterexample as in Section 3.2 since
$\Phi(Z^p)^{1/p}$ for $p=0$ is $\exp((\log A+\log B_\theta)/2)$.

{\it Case 3.3.}\enspace
Define a unital CP map $\Phi:\bM_3\to\bM_2$ by
$$
\Phi(Z):={Z[1,3]+U_\theta Z[2,3]U_\theta^*\over2},
$$
where $U_\theta$ is as in Case 3.1. For a diagonal matrix $Z:=\diag(2,1,0)$, since
$\Phi(Z^p)^{1/p}=((A^p+B_\theta^p)/2)^{1/p}$ with $A$ and $B_\theta$ in Section 3.3, we
have a counterexample for this case.

\subsection*{Acknowledgments}
The authors would like to thank the anonymous referee whose suggestions are quite helpful
to improve the paper. In particular, Theorem \ref{T-2.4} is based on the referee's
suggestion.


\begin{thebibliography}{99}

\bibitem{Bh}
R. Bhatia, {\it Matrix Analysis}, Springer, New York, 1996.

\bibitem{BS}
R. Bhatia and R. Sharma, Some inequalities for positive linear maps,
{\it Linear Algebra Appl.} {\bf 436} (2012), 1562--1571.

\bibitem{BH}
J.-C. Bourin and F. Hiai,
Norm and anti-norm inequalities for positive semi-definite matrices,
{\it Internat. J. Math.} {\bf 22} (2011), 1121--1138.

\bibitem{Ch}
M.-D. Choi, A Schwarz inequality for positive linear maps on $C^*$-algebras,
{\it Illinois J. Math.} {\bf 18} (1974), 565--574.

\bibitem{Da}
C. Davis,
A Schwarz inequality for convex operator functions,
{\it Proc. Amer. Math. Soc.} {\bf 8} (1957), 42--44.

\bibitem{FMPS}
T. Furuta, J. Mi\'ci\'c Hot, J. Pe\v cari\'c and Y. Seo,
{\it Mond-Pe\v cari\'c Method in Operator Inequalities}, Element, Zagreb, 2005.

\bibitem{HP}
F. Hansen and G. K. Pedersen, Jensen's inequality for operators and L\"owner's theorem,
{\it Math. Ann.} {\bf 258} (1982), 229--241.

\bibitem{HT}
F. Hansen and J. Tomiyama, Differential analysis of matrix convex functions II,
{\it J. Inequal. Pure Appl. Math.} {\bf 10} (2009), Article 32, 5 pp.

\bibitem{Hi1}
F. Hiai, Matrix Analysis: Matrix Monotone Functions, Matrix Means, and Majorization
(GSIS selected lectures),
{\it Interdisciplinary Information Sciences} {\bf 16} (2010), 139--248.

\bibitem{Hi2}
F. Hiai, Concavity of certain matrix trace and norm functions, preprint (arXiv:1210.7524).

\bibitem{KA}
F. Kubo and T. Ando, Means of positive linear operators,
{\it Math. Ann.} {\bf 246} (1980), 205--224.

\bibitem{MPP}
J. Mi\'ci\'c, Z. Pavi\'c and J. Pe\v cari\'c,
Jensen's inequality for operators without operator convexity,
{\it Linear Algebra Appl.} {\bf 434} (2011) 1228--1237.

\bibitem{MP}
J. Mi\'ci\'c and J. Pe\v cari\'c,
Order among quasi-arithmetic means of positive operators, II,
{\it Sci. Math. Jpn.} {\bf 71} (2010), 93--109.

\bibitem{MPS}
J. Mi\'ci\'c, J. Pe\v cari\'c and Y. Seo,
Order among quasi-arithmetic means of positive operators,
{\it Math. Rep. (Bucur.)} {\bf 14(64)} (2012), 71--86.

\end{thebibliography}
\end{document}